\newtheorem{proposition}{Proposition}[section]
\newtheorem{remark}{Remark}[section]
\newenvironment{proof}{{\bf Proof\ }}{\QED\\}
\newtheorem{lemma}{Lemma}[section]
\numberwithin{equation}{section}
\newtheorem{theorem}{Theorem}[section]
\newtheorem{corollary}{Corollary}[section]
\newcommand{\QED}{\hspace*{\fill}\rule{2.5mm}{2.5mm}}
\newcommand\qed{\hfill$\sqcap\kern-7.5pt\hbox{$\sqcup$}$}
\newcommand{\RR}{\mathbb{R}}
\newcommand{\beqn}{\begin{equation}}
\newcommand{\eeqn}{\end{equation}}
\newcommand{\bear}{\begin{eqnarray}}
\newcommand{\eear}{\end{eqnarray}}
\newcommand{\bean}{\begin{eqnarray*}}
\newcommand{\eean}{\end{eqnarray*}}
\newcommand{\cE}{\mathcal{E}}
\newcommand{\eps}{\varepsilon}
\begin{document}
\title{On the kinetic equation in Zakharov's wave turbulence theory
for capillary waves}

\author{ Toan T. Nguyen\footnotemark[1] \and Minh-Binh Tran\footnotemark[2]
}

\footnotetext[1]{Department of Mathematics, Pennsylvania State University, State College, PA 16802, USA. \\Email: nguyen@math.psu.edu. 
}

\footnotetext[2]{Department of Mathematics, University of Wisconsin-Madison, Madison, WI 53706, USA. \\Email: mtran23@wisc.edu
}

\maketitle
\begin{abstract} 

The wave turbulence equation is an effective kinetic equation that describes the dynamics of wave spectrum in weakly nonlinear and dispersive media. Such a kinetic model has been derived by physicists in the sixties, though the well-posedness theory remains open, due to the complexity of resonant interaction kernels. In this paper, we provide a global unique radial strong solution, the first such a result, to the wave turbulence equation for capillary waves. 


 \end{abstract}

{\bf Keyword:} weak turbulence theory, capillary waves, water waves system, fluids mechanics\\

{\bf MSC:} {35B05, 35B60, 82C40}

\tableofcontents

\section{Introduction}


Over the last 60 years the theory of weak turbulence has been intensively developed. In weakly nonlinear and dispersive wave models, the weak turbulence kinetic equation can be formally derived, via the statistical approach, to describe the dynamics of resonant wave interactions. The model for slightly viscous capillary waves on the surface
of a liquid reads as follows  (cf. \cite{pushkarev1996turbulence, pushkarev2000turbulence,zakharov1968stability,zakharov1967weak})
\begin{equation}\label{WeakTurbulenceInitial}
\begin{aligned}
\partial_tf + 2  \nu |k|^2 f \ =& \ Q[f]
\end{aligned}
\end{equation}
in which  $f(t,k)$ is the nonnegative wave density  at  wavenumber $k \in \RR^d$, $d \ge 2$. Here, $\nu$ denotes the positive coefficient of fluid viscosity (strictly speaking, the model is derived under the assumption $\nu \sqrt k\ll 1$ so that the dispersion remains dominating the viscous dissipation; see \cite{viscousWW} for more details on the addition of the viscous damping). The term $Q[f]$ denotes the integral collision operator, describing pure resonant
three-wave interactions. The equation is a  three-wave kinetic one, in which the collision operator is of the form 
\begin{equation}\label{def-Qf}Q[f](k) \ = \ \iint_{\mathbb{R}^{2d}} \Big[ R_{k,k_1,k_2}[f] - R_{k_1,k,k_2}[f] - R_{k_2,k,k_1}[f] \Big] dk_1dk_2 \end{equation}
with $$\begin{aligned}
R_{k,k_1,k_2} [f]:=  4\pi |V_{k,k_1,k_2}|^2\delta(k-k_1-k_2)\delta(\mathcal{E}_k -\mathcal{E}_{k_1}-\mathcal{E}_{k_2})(f_1f_2-ff_1-ff_2) 
\end{aligned}
$$
with the short-hand notation $f = f(t,k)$ and $f_j = f(t,k_j)$. The Dirac delta function $\delta(\cdot)$ is to ensure the following resonant conditions for the wavenumbers:
\begin{equation}\label{cv} k = k_1 + k_2 , \qquad \cE_k = \cE_{k_1} + \cE_{k_2} ,\end{equation}
with $\cE_k$ denoting the dispersion relation of the waves. The exact form of the collision kernel $V_{k,k_1,k_2}$ will be recalled below. 

According to the weak turbulence theory (cf. \cite{zakharov1967weak,zakharov2012kolmogorov,KorotkevichDyachenkoZakharov:2016:NSO}), equation \eqref{WeakTurbulenceInitial} in the absence of viscosity admits nontrivial equilibria $f_\infty$, called the Kolmogorov-Zakharov's spectra:  $$f_\infty(k) \approx C|k|^{-\frac{17}{4}}.$$
Moreover, such a solution can be interpreted as a universal spectrum in the region of transparency.
 These solutions are the analogs of the familiar Kolmogorov energy spectrum prediction
$C|k|^{-\frac{5}{3}}$ of hydrodynamic turbulence.

The derivation of the equation  dated back to the 60's, starting with the pioneering work of Hasselmann, Zakharov and collaborators (cf. \cite{hasselmann1962non,hasselmann1974spectral,zakharov1965weak,zakharov1968stability,zakharov1967weak}). Since then, a lot works have been done,  trying to understand the equation (see 
 \cite{zakharov1965weak,KorotkevichDyachenkoZakharov:2016:NSO,gamba2017wave,GermainIonescuTran,zakharov1999statistical,zakharov1968stability,zakharov2012kolmogorov,
 connaughton2009numerical,zakharov1967weak,newell2011wave,balk1990physical,
 pushkarev1996turbulence,pushkarev2000turbulence,faou2016weakly,buckmaster2016effective,germain2015continuous,buckmaster2016analysis,germain2016continuous,EscobedoVelazquez:2015:OTT,smith2002generation,Merino:Thesis:2015,l1997statistical} and references therein). We refer to the books \cite{Nazarenko:2011:WT} for more discussions and references on the topic. Due to its complexity, the fundamental question on the global existence and uniqueness of solutions to the equation is still {\it unsolved}. In this paper, we give, {\it for the first time}, an answer to the  fundamental question on the global existence and uniqueness of solutions to the equation, for the case where the solutions are radial.

In this paper, we develop new techniques, inspired by  recent works on quantum kinetic theory. Let us mention that the  kinetic wave equation \eqref{WeakTurbulenceInitial} has a very similar structure with the quantum Boltzmann equation that describes the evolution of the excitations in a trapped Bose gas system, in which the temperature of the gas is below the Bose-Einstein condensate transition temperature (cf. \cite{PomeauBrachetMetensRica,ZakharovNazarenko:DOT:2005,MR1837939,josserand2001nonlinear,QK0,KD1}). The collision operator that describes the interaction between excitations and condensates in the quantum Boltzmann equation reads
\begin{equation}\label{QBCollision1}C[f](k) \ = \ \iint_{\mathbb{R}^{2d}} \Big[ \bar{R}_{k,k_1,k_2}[f] - \bar{R}_{k_1,k,k_2}[f] - \bar{R}_{k_2,k,k_1}[f] \Big] dk_1dk_2 \end{equation}
with \begin{equation}\label{QBCollision2}\begin{aligned}
\bar{R}_{k,k_1,k_2} [f]:=  |\bar{V}_{k,k_1,k_2}|^2\delta(k-k_1-k_2)\delta(\bar{\mathcal{E}}_k -\bar{\mathcal{E}}_{k_1}-\bar{\mathcal{E}}_{k_2})(f_1f_2-ff_1-ff_2-f) 
\end{aligned}
\end{equation}
and $|\bar{V}_{k,k_1,k_2}|^2=\mathcal{C}^*|k||k_1||k_2|$, $\bar{\mathcal{E}}_k=\sqrt{\kappa_1 |k|^2 + \kappa_2 |k|^4}$ for some positive constants $\mathcal{C}^*,$ $\kappa_1,$ $\kappa_2$.  Recent progresses on  quantum Boltzmann equations (cf. \cite{Binh9,ToanBinh,SofferBinh1,SofferBinh2,AlonsoGambaBinh,JinBinh,reichl2017kinetic}) have opened some opportunities to tackle this  open problem, the existence and uniqueness of solutions to \eqref{WeakTurbulenceInitial}.

We note that, in the absence of the linear term in \eqref{QBCollision2} or the viscous damping in \eqref{WeakTurbulenceInitial}, singularities are likely to form. Indeed, \cite{Spohn:2010:KOT} constructed a self-similar blowup solution to the quantum Boltzmann equation, when the linear term is dropped.


\subsection{Main result}

Throughout the paper, we consider the following generalized version of \eqref{WeakTurbulenceInitial}

\begin{equation}\label{WeakTurbulenceGeneralized}
\begin{aligned}
\partial_tf + 2  \nu (|k|^2+\varrho |k|^4) f \ =& \ Q[f]
\end{aligned}
\end{equation}
for $\varrho\geq 0$. 
Solutions to the original model \eqref{WeakTurbulenceInitial} will be obtained via the limit of $\varrho \to 0$. 




The law of wave dispersion on the surface of infinitely deep liquid is of the form\begin{equation}\label{def-Ek} \cE_k  = \sqrt{\sigma |k|^3}\end{equation}
for $\sigma$ the surface tension coefficient, and the collision kernel $V_{k,k_1,k_2}$ is defined by 
\begin{equation}\label{def-VV}
\begin{aligned}
V_{k,k_1,k_2}  \ = \frac{1}{8\pi \sqrt{2\sigma}} \sqrt{\cE_k \cE_{k_1} \cE_{k_2}} \Big( \frac{L_{k_1,k_2}}{ |k| \sqrt{|k_1| |k_2|}} - \frac{L_{k,-k_1}}{ |k_2| \sqrt{|k| |k_1|}} - \frac{L_{k,-k_2}}{ |k_1| \sqrt{|k| |k_2|}} \Big) 
\end{aligned}
\end{equation}
with $L_{k_1,k_2} = k_1 \cdot k_2 +|k_1| |k_2|$; see \cite{pushkarev1996turbulence, pushkarev2000turbulence}. 
Without loss of generality, we assume the surface tension $\sigma=1$. In the scope of our paper, we only consider the case $d=2$ or $3$, which are relevant dimensions in the physical applications. 

We shall construct global unique radial solutions to \eqref{WeakTurbulenceGeneralized} in weighted $L^1$ spaces. Precisely, for $N>0$, let $L^1_N(\RR^d)$ be the function space consisting of $f(k)$ so that the norm 
$$ \| f\|_{L^1_N} : = \int_{\RR^d} f(k) \cE_k^N \; dk $$
is finite, with the dispersion relation $\cE_k$ defined as in \eqref{def-Ek}. In addition, for any $N>0$ and $\vartheta_0>0$, we introduce 
$$\mathcal{S}_N: = \Big\{ f\in L_{\frac13}^1(\RR^d) \cap L_{N+3}^1(\RR^d) ~:~ f\ge 0, \quad f(k) = f(|k|), \qquad \| f\|_{L^1_1} + \| f\|_{L^1_{N+3} } \le \vartheta_0\Big\} .$$
We shall construct solutions of \eqref{WeakTurbulenceGeneralized} that remain $\mathcal{S}_N$, if initially so. 

Our main result is as follows. 

\begin{theorem}\label{Theorem:Main}
Let $N>0$, and let $f_0(k)=f_0(|k|) \in \mathcal{S}_N \cap L^2(\RR^d)$, $d=2$ or $3$. Then for all $\varrho\geq 0$, the weak turbulence equation \eqref{WeakTurbulenceGeneralized}, with initial data $f(0,k) = f_0(k)$ and $\nu>0$, has a unique global solution $f(t,k)$ so that
\begin{equation}\label{the_theorem}
0\leq f(t,k)=f(t,|k|)\in \mathcal{C}\big([0,\infty); \mathcal{S}_N\cap L^2(\RR^d)\big)\cap  \mathcal{C}^1\big((0,\infty);L^1_N\big).
\end{equation}
Moreover, there holds the propagation of moments: for any $n\ge 1$, if $f_0 \in L^1_n$, then there exists $C_{n}>0$ such that 
\begin{equation}
\sup_{t\geq 0 }\|f(t,\cdot)\|_{L^1_{n}}\le C_{n}.
\end{equation}
\end{theorem}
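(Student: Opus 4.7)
The plan is to exploit radial symmetry to reduce $Q[f]$ to a one-dimensional integral operator in the modulus $r=|k|$ (equivalently, in the energy variable $\mathcal{E}_k=|k|^{3/2}$), derive a priori bounds on weighted moments and on the $L^2$ norm that use the viscous dissipation in an essential way, construct the solution via a truncation-and-compactness argument, and conclude with a weighted-$L^1$ stability estimate for uniqueness. The overall strategy is inspired by the quantum-Boltzmann techniques cited in the introduction, adapted to the purely concave capillary dispersion.

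For the radial reduction, taking $k$ along a fixed axis and parametrizing $k_1$ by spherical coordinates $(r_1,\theta,\phi)$, the momentum $\delta$-function gives $k_2=k-k_1$, the azimuthal integral yields an explicit geometric factor, and the energy $\delta$-function fixes $\theta$ in terms of $(r,r_1)$ via the resonance $r^{3/2}=r_1^{3/2}+|k-k_1|^{3/2}$. After the change of variable $\mathcal{E}=r^{3/2}$ one obtains a representation of $Q[f]$ as a coagulation--fragmentation-type operator with a kernel $K(r,r_1,r_2)$ supported on a resonance region bounded by the triangle inequality and inheriting the small-wavenumber singularity from $V_{k,k_1,k_2}$.

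For the moment estimates, multiplying \eqref{WeakTurbulenceGeneralized} by $\mathcal{E}_k^n$ and symmetrizing $(k,k_1,k_2)\leftrightarrow(k_j,k,k_{3-j})$ produces, via the energy identity on the resonance manifold, a bound of the form
\begin{equation*}
\frac{d}{dt}\|f\|_{L^1_n} + 2\nu\int(|k|^2+\varrho|k|^4)\mathcal{E}_k^n f\,dk \le C_n\,\|f\|_{L^1_{\alpha_1}}\|f\|_{L^1_{\alpha_2}},
\end{equation*}
with $\alpha_1,\alpha_2$ strictly below $n$. Since the viscous dissipation controls a higher moment from below, the smallness built into $\vartheta_0$ in the definition of $\mathcal{S}_N$ closes the estimate for all $n\ge 1$ and in particular keeps $f\in\mathcal{S}_N$ for all time. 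For the $L^2$ estimate, I would multiply by $f$, then bound the gain term by Cauchy--Schwarz against the loss term and the viscous dissipation, absorbing the singular factors $1/(|k|\sqrt{|k_1||k_2|})$ via the $L^1_{1/3}$ moment on one variable (using $\mathcal{E}_{k_j}=|k_j|^{3/2}$) and higher moments on the other. Existence then follows by cutting off $V_{k,k_1,k_2}$ in the small-$|k_j|$ and large-$|k_j|$ zones, applying the Banach fixed-point theorem to the resulting globally Lipschitz problem in $\mathcal{S}_N\cap L^2$, and passing to the limit via weak-$*$ compactness in $L^1_N$ and strong compactness in $L^2$ (by Aubin--Lions with equation-generated time regularity); the same compactness scheme delivers the limit $\varrho\to 0$. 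Uniqueness is obtained by writing the equation for $f-g$, estimating $\|f-g\|_{L^1_n}$ against $\|f\|_{\mathcal{S}_N}+\|g\|_{\mathcal{S}_N}$, and applying Gronwall.

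The main obstacle will be the two coexisting pathologies of $V_{k,k_1,k_2}$: the small-wavenumber singularity $1/(|k|\sqrt{|k_1||k_2|})$, which is non-integrable unless one exploits the algebraic cancellations in the combination $L_{k_1,k_2}/(|k|\sqrt{|k_1||k_2|})-L_{k,-k_1}/(|k_2|\sqrt{|k||k_1|})-L_{k,-k_2}/(|k_1|\sqrt{|k||k_2|})$, and the absence of a natural coercive structure for $Q[f]$, which forces every estimate to be quantitative in $\nu$ (and uniform in $\varrho$) so that one can ultimately let $\varrho\to 0$. The viscous dissipation is indispensable to prevent finite-time blowup, as indicated by the Spohn self-similar solution in the viscosity-free quantum-Boltzmann model mentioned above, and adapting the quantum-Boltzmann toolkit to the purely concave capillary dispersion $\mathcal{E}_k=|k|^{3/2}$ (which, unlike the Bogoliubov dispersion, has no mass gap) is where the bulk of the technical effort should lie.
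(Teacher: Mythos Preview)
Your proposal misidentifies both the moment-closure mechanism and the main obstruction, and the uniqueness argument as written cannot close.

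\textbf{On the kernel and the moment inequality.} You anticipate a non-integrable small-wavenumber singularity requiring ``algebraic cancellations'' among the three $L$-terms, and you claim the moment inequality takes the form $\frac{d}{dt}\|f\|_{L^1_n}\le C_n\|f\|_{L^1_{\alpha_1}}\|f\|_{L^1_{\alpha_2}}$ with $\alpha_1,\alpha_2$ \emph{strictly below} $n$, closed by smallness of $\vartheta_0$. Both points are wrong. On the resonant manifold $k=k_1+k_2$, one has $L_{k,-k_1}=|k_1|(|k|-|k_1|)-k_1\cdot k_2\le 2|k_1||k_2|$, so each term in $V_{k,k_1,k_2}$ is individually bounded and one obtains the pointwise estimate $|V_{k,k_1,k_2}|^2\le C\,\cE_k\cE_{k_1}\cE_{k_2}$ without any cancellation. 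But this kernel is of total order $9/2$ in $|k|$, so the $n$-th moment inequality is not sublinear: after the surface-integral analysis one gets
\[
\frac{d}{dt}\mathfrak{M}_N[f]+2\nu\,\mathfrak{M}_{N+4/3}[f]\ \le\ C_N\,\mathfrak{M}_2[f]\,\mathfrak{M}_N[f],
\]
which is \emph{linear} in the top moment with a time-dependent coefficient $\mathfrak{M}_2[f(t)]$. The closure comes not from smallness of $\vartheta_0$ but from the energy identity $\mathfrak{M}_1[f(t)]+2\nu\int_0^t\mathfrak{M}_{7/3}[f]\,ds=\mathfrak{M}_1[f_0]$, which gives $\int_0^\infty\mathfrak{M}_2\,dt<\infty$ and hence Gronwall. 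Your version with $\alpha_1,\alpha_2<n$ would make the problem essentially trivial and is not what the structure of $Q$ delivers.

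\textbf{On uniqueness.} A direct Gronwall for $\|f-g\|_{L^1_n}$ does not work: the same weight loss gives only
\[
\|Q[f]-Q[g]\|_{L^1_N}\ \le\ C\big(\|f-g\|_{L^1_{1/3}}+\|f-g\|_{L^1_{N+2}}\big),
\]
i.e.\ a loss of two energy-weights on the right, so the estimate does not close in any fixed $L^1_n$. The paper circumvents this by working first with $\varrho>0$ and invoking an abstract ODE theorem in which only a \emph{one-sided} Lipschitz condition is needed; there the extra dissipation $-2\nu\varrho|k|^4 f$ absorbs the $L^1_{N+2}$ loss because $C|k|^3-\nu\varrho|k|^4$ is bounded. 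The case $\varrho=0$ is then obtained as a limit using the $\varrho$-uniform $L^1_N$ and $L^2$ bounds and Dunford--Pettis. Your Banach fixed-point scheme in $\mathcal{S}_N\cap L^2$ and your Gronwall uniqueness both presuppose a Lipschitz property that $Q$ does not enjoy in these spaces.
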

\begin{remark} Notice that for the case where $\varrho>0$, we have a stronger result: we can remove the $L^2$ dependence on the initial condition, and the solution exists in $\mathcal{C}\big([0,\infty); \mathcal{S}_N\cap L^1(\RR^d)\big)\cap  \mathcal{C}^1\big((0,\infty);L^1_N\big)$. 
\end{remark}

Let us mention that the classical Boltzmann equation describes the evolution of the density function of a dilute classical gas. After the collision, two particles with velocities $k_1$ and $k_2$ change their velocities into $k_3$ and $k_4$. Since   the energy of the particles is of the form  $\cE_k = |k|^2$; the  conservation of moment and energy then read 
$$|k_1|^2+|k_2|^2=|k_3|^2+|k_4|^2, \ \ \ k_1+k_2=k_3+k_4. $$
As a consequence, $k_1$, $k_2$, $k_3$, $k_4$ belong to the sphere centered at $\frac{k_1+k_2}{2}$ with radius $\frac{|k_1-k_2|}{2}$ and the classical Boltzmann collision operator can be expressed as a integration on a sphere (cf. \cite{Carleman:1933:TEI,Villani:2002:RMT}).

 Let us now turn to the collision operator of \eqref{WeakTurbulenceGeneralized}. As in the classical case, the collision operator involves surface integrals. Precisely, we introduce functions
 \begin{equation}\label{FunctionH}\begin{aligned}
H_0^k(w): = \mathcal{E}_{k-w} + \mathcal{E}_w - \mathcal{E}_k, \ \ \ H_1^k(w): =\mathcal{E}_k + \mathcal{E}_w - \mathcal{E}_{k+w},
\end{aligned}\end{equation}
and  the energy surfaces, dictated by the resonant conditions \eqref{cv}, 
\begin{equation}\label{def-Sp}\begin{aligned}
S_k:& = \Big \{ w\in \mathbb{R}^d~:~H^0_k(w)=0 \Big\}
\\
S'_k : &= \Big \{w\in \mathbb{R}^d~:~ H^1_k(w)= 0 \Big\}
\end{aligned}\end{equation}
with $\cE_k = \sqrt\sigma |k|^{3/2}$. 
The collision operator $Q[f]$ then reduces to 
\begin{equation}\label{WT-Q} Q[ f](k)= \int_{S_k} R_{k,k - k_2,k_2}[ f]  \frac{d\sigma(k_2)}{|\nabla H_0^k(k_2)|} - 2 \int_{S'_k} R_{k+k_2,k,k_2} [ f]  \frac{d\sigma(k_2)}{|\nabla H_1^k(k_2)|}.\end{equation}
Difficulties arise. First, surfaces $S_k$ and $S_k'$ are no longer a sphere as in the classical case, and the analysis on these surfaces can be tricky. More seriously, due to the lack of an integration over the whole space (compare with the classical Boltzmann equation), we are forced to bound surface integrals in term of (weighted) $L^1$ norms of solutions, a type of estimates that are in general false. In Section \ref{Sec:Energy}, we shall derive such estimates for radial functions.  

By view of \eqref{cv}, the weak turbulence equation \eqref{WeakTurbulenceGeneralized} conserves momentum and energy (in the absence of viscous damping), but does not conserve mass. As a consequence, one of the issues in dealing with \eqref{WeakTurbulenceGeneralized} is that $L^1$ norms, say with weight $|k|^n$, of solutions do not close by itself, but are bounded by $L^1$ norms with a much higher-order weight. This is due to the high-order collision kernel. That is, roughly speaking, the kernel $|V_{k,k_1,k_2}|^2$ is of order $9/2$ in $|k|$, which is much higher than the order of classical Boltzmann collision kernel (typically, smaller than one). This apparent loss of weights in $|k|$ gives the impression that solutions could blow up in finite time, even in the presence of viscous damping: $2\nu |k|^2f$, which gains precisely two order in $|k|$.




%
%

We end the introduction by giving the structure of the paper: 

\begin{itemize}
\item We derive the momentum and energy identities and provide a careful study of the surface integrals on the energy manifolds. 

\item In Section \ref{Sec:MomentPropa}, we provide an a priori estimate on the $L^1_N$ norm of the solution. 

\item An $L^2$ estimate on the solutions of \eqref{WeakTurbulenceGeneralized} and the H\"older continuity of the collision operator will be established in Sections \ref{Sec:L2} and \ref{Sec:HolderEstimate}, respectively.

\item The proof of Theorem \ref{Theorem:Main} is given in Section \ref{Sec:Main}.

\end{itemize}

\section{Conservation laws and energy surfaces}\label{Sec:Scaling}

%
%

\subsection{Momentum and energy identity}\label{Sec:MomentEnergy}In this section, we obtain the basic properties of strong solutions of \eqref{WeakTurbulenceGeneralized}.  
\begin{lemma}\label{Lemma:WeakFormulation}
There holds 
$$
\begin{aligned}
\int_{\RR^d}Q[f](t,k) \varphi(k) \; dk  \ = &\  \iiint_{\mathbb{R}^{3d}} R_{k,k_1,k_2} [f]  \Big[ \varphi(k) - \varphi(k_1) - \varphi(k_2)\Big] dk dk_1dk_2 
\end{aligned}
$$
for any test functions $\varphi$ so that the integrals make sense. 
\end{lemma}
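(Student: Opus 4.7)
The identity is the standard variational (weak) form of the three-wave collision operator, and the natural route is elementary changes of variables combined with the built-in symmetries of the kernel. The plan is first to substitute the definition \eqref{def-Qf} of $Q[f]$ into the left-hand side, producing
\begin{equation*}
\int_{\RR^d}Q[f](k)\,\varphi(k)\,dk \;=\; I_0 - I_1 - I_2,
\end{equation*}
where $I_0:=\iiint_{\RR^{3d}} R_{k,k_1,k_2}[f]\,\varphi(k)\,dk\,dk_1\,dk_2$ already has the target form, while $I_1$ and $I_2$ are the analogous integrals with kernels $R_{k_1,k,k_2}[f]$ and $R_{k_2,k,k_1}[f]$, each multiplied by $\varphi(k)$.

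Next I would reduce $I_1$ by performing the measure-preserving swap $k\leftrightarrow k_1$ on $\RR^{3d}$. Under this relabeling the cubic factor $ff_2-f_1f-f_1f_2$ transforms into the canonical $f_1f_2-ff_1-ff_2$; the delta masses $\delta(k_1-k-k_2)$ and $\delta(\cE_{k_1}-\cE_k-\cE_{k_2})$ become $\delta(k-k_1-k_2)$ and $\delta(\cE_k-\cE_{k_1}-\cE_{k_2})$ (since $\delta$ is even); $|V_{k_1,k,k_2}|^2$ becomes $|V_{k,k_1,k_2}|^2$; and $\varphi(k)$ becomes $\varphi(k_1)$. Thus $I_1 = \iiint R_{k,k_1,k_2}[f]\,\varphi(k_1)\,dk\,dk_1\,dk_2$. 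I would treat $I_2$ analogously, swapping $k \leftrightarrow k_2$ to obtain $\iiint R_{k,k_2,k_1}[f]\,\varphi(k_2)\,dk\,dk_1\,dk_2$, and then invoke the symmetry of $R_{k,\cdot,\cdot}[f]$ in its last two arguments: the delta factors and the cubic factor are manifestly symmetric in $(k_1,k_2)$, and so is $|V_{k,k_1,k_2}|^2$ by \eqref{def-VV}, since $L_{k_1,k_2}=L_{k_2,k_1}$ and the last two parenthesized terms interchange under $k_1\leftrightarrow k_2$. This yields $I_2 = \iiint R_{k,k_1,k_2}[f]\,\varphi(k_2)\,dk\,dk_1\,dk_2$, and assembling $I_0-I_1-I_2$ produces the claimed identity.

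The only genuine obstacle is the bookkeeping needed to apply Fubini's theorem and to justify the change-of-variables and term-by-term rearrangement — which is precisely the content of the hypothesis that \emph{``$\varphi$ be such that the integrals make sense''}. Concretely, one needs $R_{k,k_1,k_2}[f]\cdot|\varphi(k_j)|$ to lie in $L^1(\RR^{3d})$ for $j=0,1,2$. In the applications made later in the paper (e.g.\ $\varphi(k)\in\{1,\,k,\,\cE_k,\,\cE_k^n\}$), this reduces to the moment and surface-integral estimates developed in the subsequent sections, so the lemma will be applicable in every use-case that appears in the rest of the proof of Theorem \ref{Theorem:Main}.
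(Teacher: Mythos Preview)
Your proposal is correct and follows essentially the same approach as the paper: expand $\int Q[f]\varphi$ into three pieces and apply the swaps $k\leftrightarrow k_1$ and $k\leftrightarrow k_2$ in the second and third pieces, respectively. The paper's proof is a two-line version of yours; your explicit invocation of the $k_1\leftrightarrow k_2$ symmetry of $R_{k,k_1,k_2}[f]$ and of the integrability hypothesis are details the paper leaves implicit.
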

\begin{proof} By definition, we compute 
$$
\begin{aligned}
\int_{\RR^d}Q[f](t,k) \varphi(k) \; dk  \ = &\ \iiint_{\mathbb{R}^{3d}} \Big[ R_{k,k_1,k_2} - R_{k_1,k,k_2} - R_{k_2,k,k_1} \Big] \varphi(k) dk dk_1dk_2 .
\end{aligned}
$$
By switching the variables $k\leftrightarrow k_1$, $k\leftrightarrow k_2$ in the first integral on the right, the lemma follows at once.  
\end{proof}

As a direct consequence, we obtain the following corollary. 

\begin{corollary}[Momentum and energy identities] Smooth solutions $f(t,k)$ of \eqref{WeakTurbulenceGeneralized} satisfy 
\begin{equation}\label{Coro:ConservatioMomentum}
\frac{d}{dt}\int_{\mathbb{R}^d}f(t,k)kdk + 2\nu  \int_{\mathbb{R}^d}f(t,k)k(|k|^2+\varrho |k|^4) dk=0
\end{equation}
and
\begin{equation}\label{Coro:ConservatioEnergy}
\frac{d}{dt}\int_{\mathbb{R}^d}f(t,k)\mathcal{E}_kdk + 2\nu  \int_{\mathbb{R}^d}f(t,k)\mathcal{E}_k(|k|^2+\varrho |k|^4)dk=0
\end{equation}
for all $t\ge 0$. 
\end{corollary}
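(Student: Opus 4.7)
The plan is to derive both identities from a single weak-formulation computation. Multiplying \eqref{WeakTurbulenceGeneralized} by a test function $\varphi(k)$, integrating in $k$ over $\RR^d$, and applying Lemma \ref{Lemma:WeakFormulation} gives
$$\frac{d}{dt}\int_{\RR^d} f(t,k)\,\varphi(k)\,dk \;+\; 2\nu \int_{\RR^d} f(t,k)(|k|^2+\varrho|k|^4)\,\varphi(k)\,dk \;=\; \iiint_{\RR^{3d}} R_{k,k_1,k_2}[f]\,\bigl[\varphi(k)-\varphi(k_1)-\varphi(k_2)\bigr]\, dk\,dk_1\,dk_2.$$
I would then specialize to $\varphi(k)=k_j$ for each coordinate $j=1,\ldots,d$ to obtain \eqref{Coro:ConservatioMomentum}, and to $\varphi(k)=\mathcal{E}_k$ to obtain \eqref{Coro:ConservatioEnergy}.

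The essential point is that $R_{k,k_1,k_2}[f]$ carries the factor $\delta(k-k_1-k_2)\,\delta(\mathcal{E}_k-\mathcal{E}_{k_1}-\mathcal{E}_{k_2})$, which enforces the resonance conditions \eqref{cv} on its support. For $\varphi(k)=k_j$, the bracket $\varphi(k)-\varphi(k_1)-\varphi(k_2)$ equals the $j$th component of $k-k_1-k_2$ and therefore vanishes against the momentum delta; for $\varphi(k)=\mathcal{E}_k$, the bracket is precisely $\mathcal{E}_k-\mathcal{E}_{k_1}-\mathcal{E}_{k_2}$ and vanishes against the energy delta. In either case the triple integral on the right-hand side is identically zero, and the two claimed balance laws drop out immediately.

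The only technical point to attend to is the justification of using unbounded test functions $\varphi(k)=k$ and $\varphi(k)=|k|^{3/2}$ in Lemma \ref{Lemma:WeakFormulation}, together with commuting $d/dt$ past the $k$-integral. For smooth solutions with the moment integrability built into $\mathcal{S}_N$ (in particular $f\in L^1_{N+3}$), the integrands are absolutely convergent and Fubini applies, so these steps are routine rather than delicate. Hence I do not anticipate any real obstacle: the corollary is a direct consequence of the weak formulation combined with the resonance constraints \eqref{cv}.
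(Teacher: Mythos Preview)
Your argument is correct and follows exactly the paper's approach: apply Lemma \ref{Lemma:WeakFormulation} with $\varphi(k)=k$ and $\varphi(k)=\mathcal{E}_k$, and use the resonant conditions \eqref{cv} to annihilate the collision term. Your additional remarks on integrability are a welcome clarification but do not depart from the paper's line of proof.
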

\begin{proof} This follows from Lemma \ref{Lemma:WeakFormulation} by taking $\varphi(k) = k$ and $\varphi(k) = \mathcal{E}_k$, and using the resonant conditions \eqref{cv}.
 \end{proof}


%
%
%

\subsection{Energy surfaces}\label{Sec:Energy}
Our first step is to study the surface integrals. For sake of generality, we consider in this section the following power-law energy function 
\begin{equation}\label{def-Epp}\cE_k = \cE(k) = |k|^\gamma, \qquad 1 < \gamma \le 2.\end{equation}
In the case when $\gamma=1$, the surface $S_k$ degenerates into a straight line $S_k = \{\alpha k\}_{\alpha \in [0,1]}$, and the surface integral reduces to a line integral. Such an energy corresponds to the  dispersion law of phonons, and has been studied in \cite{AlonsoGambaBinh,CraciunBinh,Binh9}.

\begin{figure}[t]
\centering
\includegraphics[scale=.45]{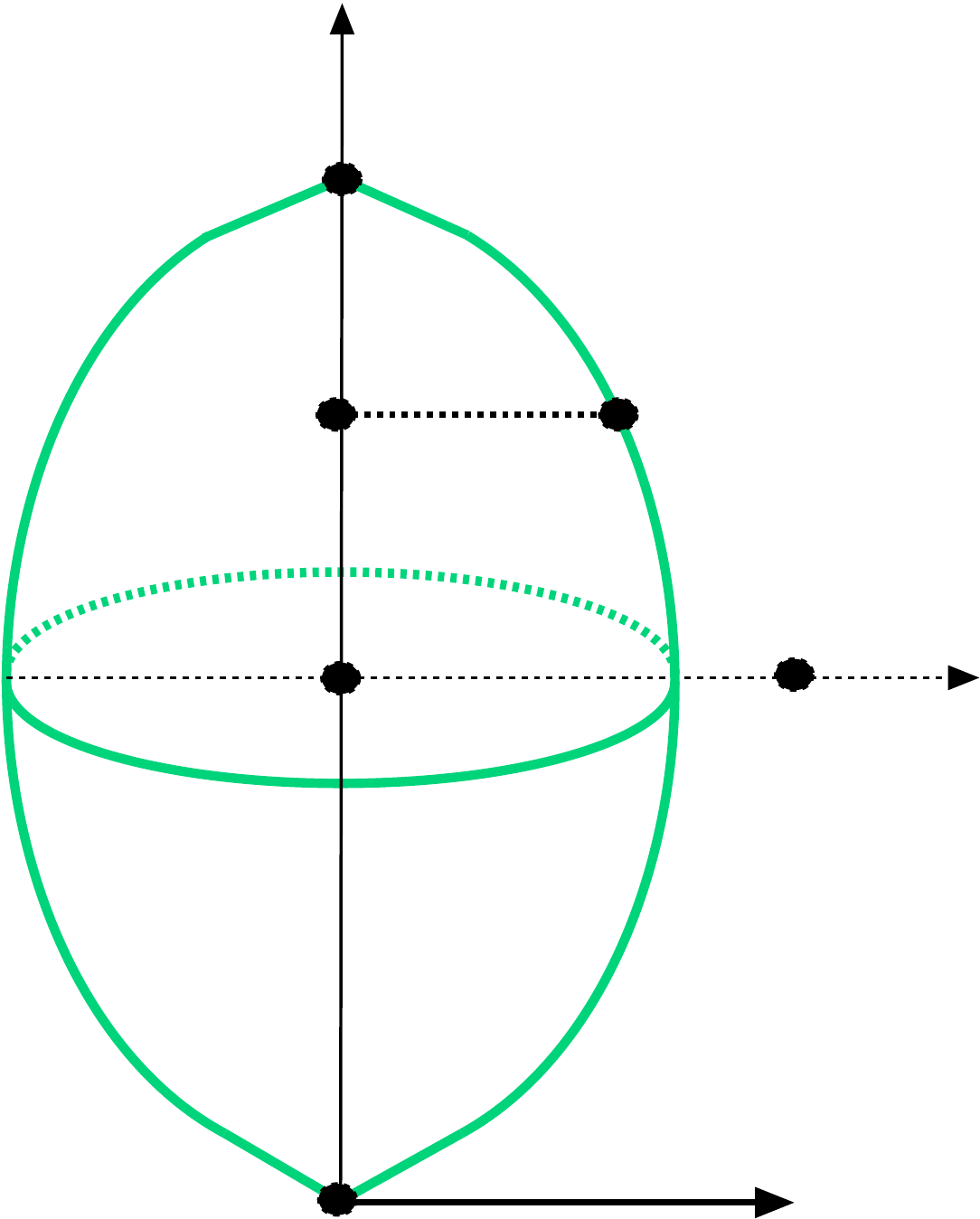}
\put(-95,-10){$0$}
\put(-87,155){$p$}
\put(-25,85){$\frac{|p|}2$}
\put(-30,10){$q$}
\put(-87,85){$\frac{p}2$}
\put(-110,117){$\alpha p$}
\put(-50,120){$w_\alpha$}
\put(-85,120){$s(\alpha)$}
\caption{\em Sketched is the surface $S_p$, centered at $\frac p2$ and having $0$ and $p$ as its south and north poles, respectively.}
\label{fig-Sp}
\end{figure}

\begin{lemma}[Surface $S_p$]\label{lem-Sp} Let $\gamma \in (1,2]$ and $S_p$ be defined as in \eqref{def-Sp}-\eqref{def-Epp}. Then, for each $p$, $S_p = |p|S_{e_p}$, with $e_p = p/|p|$. 
In addition, there hold the following properties:

i. $\{0,p\}\subset S_p \subset \overline{B(\frac p2,\frac{|p|}{2})}$. 

ii. The surface $S_p$ is invariant under the rotation around $p$, and can be parametrized by 
$$ S_p = \Big\{ w (\alpha,e_q) = \alpha p + s(\alpha) e_q ~:~\alpha \in [0,1], \quad |e_q|=1, \quad e_q \cdot p =0\Big\} $$
for some function $s(\alpha)$ that is smooth in $(0,1)$; see Figure \ref{fig-Sp}.  In the two dimensional case: $d=2$, $S_p$ is a curve parametrized by $\alpha\in [0,1]$. 

iii. $s(\alpha) = s(1-\alpha)$, $s(0)=0$, and $s(\alpha)$ is strictly increasing and invertible on $(0,\frac12)$.

iv. There are universal constants $c_0,C_0$ so that the surface area satisfies 
$$  c_0 |p|^{d-\gamma}  \le  \int_{S_p} \frac{d\sigma(w)}{|\nabla_w H_0^p(w)|} \le  C_0 |p|^{d-\gamma} ,$$
uniformly in $p\in \mathbb{R}^d$. 

v. There holds
$$ \int_{S_p} F(|w|) \frac{d\sigma(w)}{|\nabla_w H_0^p(w)|} \le C|p|^{d-\gamma-2} \int_{|w|\le |p|} F(|w|) |w|^{2-d} dw.$$
\end{lemma}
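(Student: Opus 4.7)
The key structural observation is the homogeneity $H_0^p(|p|w') = |p|^\gamma H_0^{e_p}(w')$, which immediately gives the scaling identity $S_p = |p|S_{e_p}$ and reduces universal claims to the case $|p|=1$. For (i), the endpoints give $H_0^p = 0$ by direct substitution. For the ball containment, I parametrize $w = \alpha p + s e_q$ with $e_q\perp p$, $|e_q|=1$, and evaluate $H_0^p$ at $s_* := |p|\sqrt{\alpha(1-\alpha)}$ (the boundary of the ball): one finds $|w|=\sqrt{\alpha}|p|$, $|p-w|=\sqrt{1-\alpha}|p|$, hence $H_0^p\big|_{s=s_*} = |p|^\gamma\bigl(\alpha^{\gamma/2}+(1-\alpha)^{\gamma/2}-1\bigr)\geq 0$ using $x^{\gamma/2}\geq x$ on $[0,1]$ (valid for $\gamma\leq 2$). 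Since $s\mapsto H_0^p(\alpha p + s e_q)$ is strictly increasing for $s\geq 0$ (as $\partial_s H_0^p = \gamma s(|p-w|^{\gamma-2}+|w|^{\gamma-2}) > 0$), its unique zero $s(\alpha)$ satisfies $s(\alpha)\leq s_*$, giving (i) and the parametrization of (ii); smoothness of $s(\alpha)$ on $(0,1)$ follows from the implicit function theorem, and in $d=2$ the set $S_p$ collapses to a curve since $e_q$ is determined up to sign.

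For (iii), the symmetry $s(\alpha)=s(1-\alpha)$ follows from the invariance of the defining equation under $\alpha\leftrightarrow 1-\alpha$, and $s(0)=0$ from the uniqueness of the zero of the monotone function $\sigma\mapsto(|p|^2+\sigma^2)^{\gamma/2}+\sigma^\gamma-|p|^\gamma$ at $\alpha=0$. Monotonicity on $(0,1/2)$ is cleanest via reparametrization by $r=|w|$: using $|p-w|^\gamma=|p|^\gamma-r^\gamma$ on $S_p$, one derives $\alpha(r) = \tfrac{1}{2}\bigl(1 + (r^2-(|p|^\gamma-r^\gamma)^{2/\gamma})/|p|^2\bigr)$, strictly increasing on $(0,|p|)$, and then $s^2 = r^2 - \alpha(r)^2|p|^2$ has positive derivative on $(0,|p|/2^{1/\gamma})$, i.e., for $\alpha\in(0,1/2)$; this yields strict monotonicity and hence invertibility of $s(\cdot)$ there.

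For (iv), the scaling identities $d\sigma_{S_p} = |p|^{d-1}d\sigma_{S_{e_p}}$ and $|\nabla H_0^p(w)|=|p|^{\gamma-1}|\nabla H_0^{e_p}(w/|p|)|$ give $\int_{S_p} d\sigma/|\nabla H_0^p| = |p|^{d-\gamma}\cdot C(d,\gamma)$, where $C(d,\gamma) := \int_{S_{e_p}} d\sigma/|\nabla H_0^{e_p}|$ is a universal positive constant (independent of $e_p$ by rotational invariance). Finiteness follows from compactness of $S_{e_p}$ and the observation that the only critical point of $H_0^{e_p}$ in $\mathbb{R}^d$ is $w = e_p/2$ (the equation $|w|^{\gamma-2}w = |e_p-w|^{\gamma-2}(e_p-w)$ forces $|w|=|e_p-w|$ and hence $w=e_p/2$), at which $H_0^{e_p}(e_p/2) = 2^{1-\gamma}-1 < 0$; thus $|\nabla H_0^{e_p}|$ is uniformly bounded below on the compact surface $S_{e_p}$.

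For (v), I rewrite the integral as $\int_{\mathbb{R}^d}F(|w|)\delta(H_0^p(w))\,dw$ and pass to spherical coordinates $w = r\omega$. Writing $\omega = \cos\theta\, e_p + \sin\theta\, e_\perp$, the surface constraint reduces to $\cos\theta = c(r) := (|p|^2+r^2-(|p|^\gamma-r^\gamma)^{2/\gamma})/(2r|p|)$; integrating out the angular delta via $|\partial_\theta H_0^p| = \gamma r|p|\sin\theta\,|p-r\omega|^{\gamma-2}$ together with $|p-r\omega|^{\gamma-2}=(|p|^\gamma-r^\gamma)^{(\gamma-2)/\gamma}$ on $S_p$ yields
\begin{equation*}
\int_{S_p} F(|w|)\frac{d\sigma}{|\nabla H_0^p|} = \frac{\omega_{d-2}}{\gamma|p|}\int_0^{|p|}F(r)\,r^{d-2}\sin^{d-3}\theta(r)\bigl(|p|^\gamma-r^\gamma\bigr)^{(2-\gamma)/\gamma}dr.
\end{equation*}
For $d\geq 3$, one bounds $\sin^{d-3}\theta\leq 1$, $(|p|^\gamma-r^\gamma)^{(2-\gamma)/\gamma}\leq |p|^{2-\gamma}$, and $r^{d-2}\leq |p|^{d-3}r$ for $r\leq |p|$; these combine to give the target bound. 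The main obstacle is the two-dimensional case, where $\sin^{d-3}\theta=1/\sin\theta$ fails to be pointwise bounded as $r\to|p|$; there the asymptotic $\sin\theta(r)\sim(|p|-r)^{1/\gamma}$ combined with the decay $(|p|^\gamma-r^\gamma)^{(2-\gamma)/\gamma}\sim(|p|-r)^{(2-\gamma)/\gamma}$ keeps the integrand integrable, and the bound in the stated form follows after a more careful change of variable via the monotonic map $\alpha\mapsto r$ from (iii).
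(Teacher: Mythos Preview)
Your arguments for (i)--(iv) are correct. The paper's route differs mainly at (iv): rather than your soft scaling-plus-compactness argument, it derives an explicit density $\frac{d\sigma}{|\nabla H_0^p|}=\frac{u\,|p-w|^{2-\gamma}}{\gamma|p|}\,du\,d\theta$ (in $d=3$, $u=|w|$) by parametrizing $S_p$ in $(\alpha,\theta)$ and then changing to $u$; this formula is then reused directly for (v). Your coarea computation in spherical coordinates for (v) is equivalent to that calculation, and your bound for $d\ge3$ is fine.

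The genuine gap is (v) for $d=2$. You correctly flag the unbounded factor $1/\sin\theta$ and your endpoint asymptotics are right, but integrability is not what is needed: inequality (v) for all nonnegative $F$ amounts to the \emph{pointwise} kernel bound
\[
\frac{(|p|^\gamma-r^\gamma)^{(2-\gamma)/\gamma}}{\sin\theta(r)}\;\le\;C\,|p|^{1-\gamma}\,r,\qquad 0<r<|p|,
\]
and this fails at both ends: as $r\to|p|$ the left side blows up like $(|p|-r)^{(1-\gamma)/\gamma}$, while as $r\to 0$ the left side tends to $|p|^{2-\gamma}$ but the right side vanishes. No change of variable repairs this --- indeed with $F=\mathbf{1}_{[|p|-\delta,|p|]}$ one gets left side $\asymp\delta^{1/\gamma}$ against right side $\asymp\delta$. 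So ``the bound in the stated form follows'' is not justified for $d=2$. The paper itself does not address this: it declares the case $d=2$ ``simpler'' and records only the $d=3$ density, which misses the extra factor $1/s(\alpha)$ that appears in two dimensions.
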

\begin{proof} Let $p\in \RR^d \setminus \{0\}$. It is clear that $S_p = |p| S_{e_p}$, with $e_p = p/|p|$. Thus, it suffices to study the case when $|p|=1$. As for (i), it is clear that $\{0,p\}\subset S_p$.  Next, since $a^\gamma + b^\gamma \le (a+b)^\gamma$, we have for $w\in S_p$
$$ 1 = \Big( |p-w|^\gamma + |w|^\gamma\Big)^{2/\gamma} \ge |p-w|^2 + |w|^2.$$
This proves that $w \in \overline{B(\frac p2,\frac{1}{2})}$. (i) follows.

As for (ii), we write $w = \alpha p + q$ for $q$ orthogonal to $p$. By orthogonality, $|w|$ and $|w-p|$ do not depend on the direction of $q$, and neither does $S_p$. That is, $S_p$ is invariant under the rotation around $p$. We set 
$$w(\alpha,s) = \alpha p + s e_q.$$
We shall prove the existence of a function $s=s(\alpha)$ for $\alpha\in (0,1)$, so that $w(\alpha,s(\alpha))\in S_p$. To this end, let 
$$H_0^p(\alpha,s): = \mathcal{E}(p-w(\alpha,s))  + \mathcal{E}(w(\alpha,s)) - \mathcal{E}(p),$$
as in \eqref{FunctionH}.  
Clearly, $H_0^p(\alpha,s) =0$ if and only if $w(\alpha,s) \in S_p$. Observe that $H_0^p(\alpha,0) <0$ (by convexity of $\cE(p)$) and $H_0^p(\alpha,s) >0$ for sufficiently large $s$ (and hence large $|w(\alpha,s)|)$. The existence of a such $s(\alpha)$ follows. In addition, a direct computation yields 
\begin{equation}\label{DG}
\begin{aligned}
\nabla_w H_0^p
 &= \frac{ w - p}{|p-w|} \mathcal{E}'(p -w) + \frac{w}{|w|} \mathcal{E}'(w)
 \end{aligned}\end{equation}
and hence $\partial_s H_0^p(\alpha,s) = e_q \cdot \nabla_w H_0^p$ is positive, since $\cE'(w) >0$ (for $w\not=0$). That is, $H_0^p(\alpha,s)$ is increasing in $s$ for each $\alpha$ and $s(\alpha)$ is uniquely determined, so that $H_0^p(\alpha,s(\alpha)) =0$. The smoothness of $s(\alpha)$ follows from that of $\cE(\cdot)$. This proves (ii).

Next, the symmetry stated in (iii) is clear from the definition of $S_p$, and it suffices to study $s(\alpha)$ for $\alpha\in (0,\frac12]$. Observe that for $w_\alpha = w(\alpha,s(\alpha)) \in S_p$, we have $0=F(\alpha,s(\alpha))$ and 
\begin{equation}\label{id-dF}
\begin{aligned}
 0 &= \partial_\alpha H_0^p + s'(\alpha) \partial_s H_0^p = p \cdot \nabla_w H_0^p + s'(\alpha) e_q \cdot \nabla_w H_0^p
 \\
 &= 
   -  \frac{\mathcal{E}'(p -w_\alpha)}{|p-w_\alpha|} + ( s(\alpha) s'(\alpha)  + \alpha )\Big[ \frac{\mathcal{E}'(p -w_\alpha)}{|p-w_\alpha|} + \frac{\mathcal{E}'(w_\alpha) }{|w_\alpha|}\Big] .
 \end{aligned}\end{equation}
Setting $\cE_1(w):= |w|^{\gamma-2}$, we have 
\begin{equation}\label{ss-pr}s(\alpha) s'(\alpha) =\frac{ (1-\alpha) \cE_1(p-w_\alpha) - \alpha \cE_1(w_\alpha)  }{ \cE_1(p-w_\alpha) + \cE_1(w_\alpha)} .\end{equation}
Observe that the function in the numerator in \eqref{ss-pr} is decreasing in $\alpha$, and vanishes at $\alpha =\frac12$. Hence, for $\alpha \in (0,\frac12)$, we have $s'(\alpha) >0$, and hence $s(\alpha)$ is invertible, on $(0,\frac12)$. This yields (iii).

Next, we compute the surface area of $S_p$. Let us consider the case when $d\ge 3$; the case when $d=2$ is simpler, as the surface is parametrized solely by $\alpha \in [0,1]$. Writing $w(\alpha,\theta) = \alpha p + s(\alpha) e_\theta$, we have 
\begin{equation}\label{dS}\begin{aligned}
 d \sigma (w) &= |\partial_\alpha w \times \partial_\theta w | d\alpha d\theta  = \Big |(p+s'(\alpha) e_\theta) \times s(\alpha)\partial_\theta e_\theta \Big | d\alpha d\theta
\\
 &=
 \sqrt{|p|^2  |s(\alpha) | ^2+ \frac14| \partial_\alpha  (|s(\alpha) |^2)|^2}d\alpha  d\theta.
 \end{aligned}\end{equation}
 
 We deduce from \eqref{id-dF} that
 \begin{equation}\label{dG-qa}
\begin{aligned}
 0 &= \partial_\alpha w_\alpha \cdot \nabla_w H_0^p\\
 & =
  \frac12 \partial_\alpha |s(\alpha)|^2  \left[ \frac{\mathcal{E}'(|p -w_\alpha|)}{|p-w_\alpha|} + \frac{\mathcal{E}'(|w_\alpha|) }{|w_\alpha|}\right]  + \alpha |p|^2 \frac{\mathcal{E}'(|w_\alpha|) }{|w_\alpha|} - (1-\alpha) |p|^2 \frac{\mathcal{E}'(|p -w_\alpha|)}{|p-w_\alpha|}.
 \end{aligned}\end{equation}
 It is straightforward that
 \begin{equation}\label{proposition:T2L2:E8}
\begin{aligned}
  \partial_\alpha |s(\alpha)|^2  &
 &= 
2|p|^2 \frac{\gamma   \frac{\mathcal{E}'(|w_\alpha|) }{|w_\alpha|} +(\alpha-1)  \frac{\mathcal{E}'(|p -w_\alpha|)}{|p-w_\alpha|}}{\frac{\mathcal{E}'(|p -w_\alpha|)}{|p-w_\alpha|} + \frac{\mathcal{E}'(w_\alpha) }{|w_\alpha|}}.
 \end{aligned}\end{equation}

Now, let us compute $|\nabla H_0^p|$ under the new parametrization
\begin{equation*}
\begin{aligned}
 |\nabla H_0^p|^2 \ =  &\  |p|^2\left[\alpha   \frac{\mathcal{E}'(|w_\alpha|) }{|w_\alpha|} +(\alpha-1)  \frac{\mathcal{E}'(|p -w_\alpha|)}{|p-w_\alpha|}\right]^2\\
 & \ + |s(\alpha)|^2 \left[\frac{\mathcal{E}'(|p -w_\alpha|)}{|p-w_\alpha|} + \frac{\mathcal{E}'(|w_\alpha|) }{|w_\alpha|}\right]^2,
\end{aligned}
\end{equation*}
which, in companion with \eqref{proposition:T2L2:E8}, implies
\begin{equation}\label{proposition:T2L2:E9}
\begin{aligned}
 |\nabla  H_0^p|^2 \ =  &\  \frac{\left|\partial_\alpha |s(\alpha)|^2\right|^2}{4|p|^2}\left[\frac{\mathcal{E}'(|p -w_\alpha||)}{|p-w_\alpha||} + \frac{\mathcal{E}'(|w_\alpha||) }{|w_\alpha||}\right]^2\\
 & \ + |s(\alpha)|^2 \left[\frac{\mathcal{E}'(|p -w_\alpha|)}{|p-w_\alpha|} + \frac{\mathcal{E}'(|w_\alpha|) }{|w_\alpha|}\right]^2,
\end{aligned}
\end{equation}
We get the following representation of  $|\nabla H_0^p|$
\begin{equation}\label{proposition:T2L2:E10}
\begin{aligned}
 |\nabla H_0^p| \ =  &\  \frac{\sqrt{ \frac{\left|\partial_\alpha |s(\alpha)|^2\right|^2}{4} +  |\alpha|^2|p|^2}}{|p|}\left[\frac{\mathcal{E}'(|p -w_\alpha|)}{|p-w_\alpha|} + \frac{\mathcal{E}'(|w_\alpha|) }{|w_\alpha|}\right].
\end{aligned}
\end{equation}

As for the surface integral of a radial function $\mathcal{G}(|w|)$, we  introduce the radial variable $ u = |W_\alpha | = \sqrt{\alpha^2 |p|^2 + |q_\alpha|^2}$. We compute $2u du =\partial_\alpha |W_\alpha|^2 d\alpha$ and hence 
$$\frac{d \sigma (w) }{|\nabla H_0^p|}= \frac{|p|}{2\left[\frac{\mathcal{E}'(|p -w_\alpha|)}{|p-w_\alpha|} + \frac{\mathcal{E}'(|w_\alpha|) }{|w_\alpha|}\right]\partial_\alpha |w_\alpha|^2 } u du d\theta,
$$ 
which in combination with \eqref{dG-qa} yields
$$\frac{d \sigma (w) }{|\nabla H_0^p|}= \frac{|p-w_\alpha|}{\mathcal{E}'(|p-w_\alpha|)|p|} du d\theta= \frac{u|p-w_\alpha|^{2-\gamma}}{|p|(\gamma-1)} du d\theta,
$$ 
Since
$$\frac{u||p|+u|^{2-\gamma}}{|p|(\gamma-1)} \le \frac{u|p-w_\alpha|^{2-\gamma}}{|p|(\gamma-1)}\le \frac{u[|p|+u]^{2-\gamma}}{|p|(\gamma-1)},$$
upon noting that $d\sigma(S_p) = |p|^{d-1} d\sigma(S_{e_p})$ and defining $v=\frac{u}{|p|}$, we obtain
$$ \int_{S_p} \frac{d\sigma(w)}{|\nabla_w H_0^p|}  \ge c_1 |p|^{d-\gamma}\int_0^1 v|1-v|^{2-\gamma} dv \ge  c_0|p|^{d-\gamma},$$
and
$$\int_{S_p} \frac{d\sigma(w)}{|\nabla_w H_0^p|} ) \le C_1 |p|^{d-\gamma}\int_0^1v|1+v|^{2-\gamma} dv \le  C_0|p|^{d-\gamma},$$
for some $c_0, c_1,$ $C_0, C_1$, depending only on $\gamma$ (in particular, independent of $p$). This proves (iv). 

Finally, we check the surface integral of a radial function $f(|w|)$. It is clear that $$ \int_{S_p} f(|w|) \frac{d \sigma (w) }{|\nabla H_0^p|} \le C|p|^{d-\gamma-2} \int_0^{|p|} f(u) u du.$$
The lemma follows by the spherical coordinates $dw=|w|^{d-1}d(|w|)d\sigma(\mathbb{S}^d).$ 
\end{proof}

\begin{figure}[t]
\centering
\includegraphics[scale=.45]{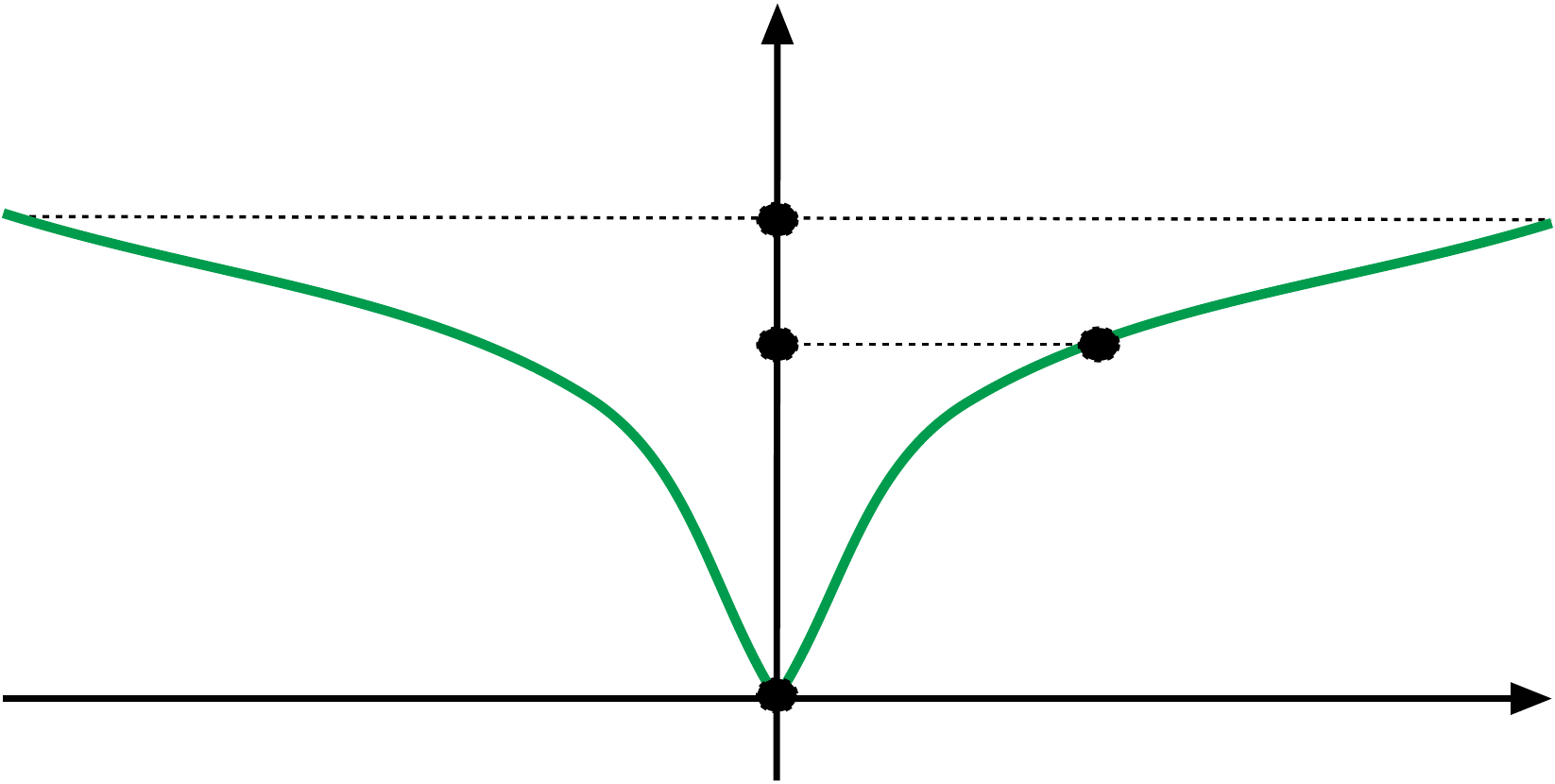}
\put(-105,-2){$0$}
\put(-100,105){$p$}
\put(-8,17){$q$}
\put(-105,50){$\alpha p$}
\put(-57,55){$w_\alpha$}
\put(-95,65){$s(\alpha)$}
\caption{\em Sketched is the trace of $S'_p$ on any two dimensional plane containing $p$.}
\label{fig-Sp1}
\end{figure}

\begin{lemma}[Surface $S_p'$]\label{lem-Sp1} Let $S_p'$ be defined as in \eqref{def-Sp}-\eqref{def-Epp}. Then, $S_p' = |p|S'_{e_p}$, with $e_p = p/|p|$. In addition, 
There is a positive constant $C_0$ so that 
\begin{equation}\label{bd-Sp123} \int_{S_p'} F(|w|)\; \frac{d \sigma(w) }{|\nabla_w H_1^{p}(w)|}\le C_0  |p|^{d-2-\gamma}\int_0^\infty  F(|w|)(1+|w|/|p|)^{2-\gamma}|w|^{2-d} dw\end{equation}
uniformly in $p\in \mathbb{R}^d.$ 
\end{lemma}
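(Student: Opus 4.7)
The plan is to transplant the strategy of Lemma \ref{lem-Sp} to the non-compact surface $S_p'$. The scaling $S_p' = |p|S_{e_p}'$ is immediate from the identity $H_1^p(w) = |p|^\gamma H_1^{e_p}(w/|p|)$. Since $S_p'$ inherits the rotational symmetry of $H_1^p$ around the $p$-axis, every point of $S_p'$ can be written $w_\alpha = \alpha p + s(\alpha) e_q$ with $e_q \in \mathbb{S}^{d-2}$ orthogonal to $p$. A short computation shows $p \cdot w > 0$ on $S_p' \setminus \{0\}$ (equivalent to $(|p|^\gamma + |w|^\gamma)^{2/\gamma} > |p|^2 + |w|^2$, which holds for $\gamma < 2$ and $w \ne 0$), so $\alpha \ge 0$. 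The function $s = s(\alpha) > 0$ is defined implicitly by $H_1^p(\alpha, s) = 0$: for each $\alpha > 0$, $H_1^p(\alpha, 0) = |p|^\gamma\bigl(1 + \alpha^\gamma - (1+\alpha)^\gamma\bigr) < 0$ while $\lim_{s \to \infty} H_1^p(\alpha, s) = |p|^\gamma > 0$, and $\partial_s H_1^p$ has a definite sign for $s$ large, giving a unique smooth $s(\alpha)$.

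With $\nabla_w H_1^p = \gamma\bigl(|w|^{\gamma-2} w - |p+w|^{\gamma-2}(p+w)\bigr)$ in hand, I differentiate $H_1^p(w_\alpha) \equiv 0$ in $\alpha$ to obtain the counterpart of \eqref{ss-pr},
\begin{equation*}
s(\alpha)\, s'(\alpha) \;=\; |p|^2\,\frac{(1+\alpha)\, B - \alpha\, A}{A - B}, \qquad A := |w_\alpha|^{\gamma-2}, \quad B := |p+w_\alpha|^{\gamma-2},
\end{equation*}
in which $A > B > 0$ because $|p+w_\alpha| > |w_\alpha|$ and $\gamma < 2$. Plugging this into $|\nabla_w H_1^p|^2 = \gamma^2\bigl[|p|^2\bigl(\alpha A - (1+\alpha) B\bigr)^2 + s^2(A-B)^2\bigr]$ collapses the expression to $|\nabla_w H_1^p| = \gamma\, s(A-B)\sqrt{|p|^2 + s'^2}/|p|$. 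Combining with the surface element $d\sigma(w) = s^{d-2}\sqrt{|p|^2 + s'^2}\,d\alpha\,d\sigma_{\mathbb{S}^{d-2}}$ yields
\begin{equation*}
\frac{d\sigma(w)}{|\nabla_w H_1^p(w)|} \;=\; \frac{s^{d-3}\,|p|}{\gamma\,(A-B)}\,d\alpha\,d\sigma_{\mathbb{S}^{d-2}}.
\end{equation*}

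Finally, I switch to the radial variable $u = |w_\alpha|$. Using $2u\,du = 2\alpha|p|^2\,d\alpha + 2s s'\,d\alpha$ together with the formula for $s s'$ above produces the clean identity $du/d\alpha = |p|^2 B/\bigl(u(A-B)\bigr)$, so that
\begin{equation*}
\frac{d\sigma(w)}{|\nabla_w H_1^p(w)|} \;=\; \frac{s^{d-3}\,u\,|p+w_\alpha|^{2-\gamma}}{\gamma\,|p|}\,du\,d\sigma_{\mathbb{S}^{d-2}}.
\end{equation*}
For a radial $F(|w|)$, the desired bound \eqref{bd-Sp123} follows from the elementary estimate $|p+w_\alpha| \le |p| + u$ (extracting the factor $|p|^{2-\gamma}(1 + u/|p|)^{2-\gamma}$) together with a suitable control on the $s^{d-3}$ factor; the upper bound $s \le u$ suffices in the case $d=3$. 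The chief technical point is $d = 2$, where $s^{d-3} = s^{-1}$ and a matching lower bound $s \ge c_\gamma u$ is required: writing $s = u \sin\theta_\alpha$, this reduces to showing that $\cos\theta_\alpha = \bigl((1+r^\gamma)^{2/\gamma} - 1 - r^2\bigr)/(2r)$ with $r = u/|p|$ is bounded away from $1$ uniformly; one checks that this expression is symmetric under $r \leftrightarrow 1/r$, vanishes as $r \to 0$ or $r \to \infty$, and attains its maximum at $r = 1$ with value $2^{2/\gamma - 1} - 1 \in (0,1)$, delivering the required uniform lower bound and hence the lemma.
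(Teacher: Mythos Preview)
Your argument follows the same route as the paper: parametrize $S_p'$ by $w_\alpha=\alpha p+s(\alpha)e_q$, compute $|\nabla H_1^p|$ and $d\sigma$ in these coordinates, and change to the radial variable $u=|w_\alpha|$, using the constraint relation to collapse the Jacobian. Your derivation of the identity
\[
\frac{d\sigma(w)}{|\nabla_w H_1^p(w)|}\;=\;\frac{s^{d-3}\,u\,|p+w_\alpha|^{2-\gamma}}{\gamma\,|p|}\,du\,d\sigma_{\mathbb{S}^{d-2}}
\]
is correct and in fact more explicit than the paper's, which writes only the $d=3$ surface element and then invokes spherical coordinates. Your treatment of $d=2$---isolating the $s^{-1}$ factor and bounding it via the angle $\theta_\alpha$---is a genuine addition that the paper omits. (Minor remark: you do not need to locate the maximum of $\cos\theta_\alpha$ precisely at $r=1$; it suffices that the function is continuous, strictly below $1$ for every $r>0$, and vanishes at $0$ and $\infty$, so its supremum is strictly less than $1$.)

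There is, however, a gap in the final step for $d=2$. The bound $s\ge c_\gamma u$ gives only $s^{-1}u\le c_\gamma^{-1}$, so the measure is controlled by $\dfrac{|p+w_\alpha|^{2-\gamma}}{\gamma c_\gamma|p|}\,du$, and after $|p+w_\alpha|\le|p|+u$ one obtains
\[
\int_{S_p'}F(|w|)\,\frac{d\sigma(w)}{|\nabla_w H_1^p(w)|}\;\le\;C\,|p|^{1-\gamma}\int_{\mathbb{R}^2}F(|w|)\,(1+|w|/|p|)^{2-\gamma}\,|w|^{-1}\,dw,
\]
with weight $|p|^{1-\gamma}|w|^{-1}$, whereas \eqref{bd-Sp123} for $d=2$ claims weight $|p|^{-\gamma}|w|^{0}$. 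These are not comparable; in fact \eqref{bd-Sp123} as stated fails for $d=2$ when $F$ is supported near the origin (for $F$ supported on $|w|\le\epsilon$, the left side is $\sim|p|^{1-\gamma}\epsilon$ while the stated right side is $\sim|p|^{-\gamma}\epsilon^2$). The paper's own proof sidesteps this by effectively computing only in $d=3$; what your argument actually establishes in $d=2$ is the variant displayed above, which coincides with \eqref{bd-Sp123} precisely when $d=3$.
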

\begin{proof} It is clear that $S_p' = |p| S_{e_p}'$, in which the surface 
\begin{equation}S'_{e_p} =\Big \{ w(\alpha,\theta) = \alpha e_p + s(\alpha)e_\theta~:~\alpha \in [0, \alpha_0),~ \theta \in [0,2\pi]\Big\}\end{equation}
for some monotonic function $s(\alpha)$ and some positive constant $\alpha_0$; see Figure \ref{fig-Sp1}. We stress that the parametrization $\alpha, s(\alpha)$ and $\theta$ are independent of $p$. As a consequence, the surface integral on $S'_{e_p}$ is independent of $p$. 

As in \eqref{dS}, we have
$$
 d \sigma (w) = \sqrt{|p|^2  |s(\alpha)| ^2+ \frac14| \partial_\gamma (|s(\alpha)|^2)|^2}d\alpha d\theta 
 $$
and hence, the surface  is estimated by 
$$ 
\begin{aligned}
\frac{d \sigma(w) }{|\nabla H_1^{p}(w)|} \; 
&=\frac{\sqrt{|p|^2  |s(\alpha)| ^2+ \frac14| \partial_\alpha (|s(\alpha)|^2)|^2}}{|\nabla H_1^{p}(|\alpha p + s(\alpha)|)|}  d\alpha d\theta .
\end{aligned}$$
Let us introduce the variable $ u = |w | = \sqrt{\alpha^2 |p|^2 + |s(\alpha)|^2}$. We compute 
$$ 2u du =\partial_\alpha |w_\alpha|^2 d\alpha$$ 
and hence \begin{equation}\label{bd-intSp2} 
\begin{aligned}
 \frac{ d \sigma(w)}{|\nabla H_1^{p}(w_\alpha)|} \; = C \frac{ \sqrt{|p|^2  |s(\alpha)| ^2+ \frac14| \partial_\alpha (|s(\alpha)|^2)|^2} }{2\partial_\alpha  |w_\alpha|^2|\nabla H_1^{p}(|\alpha p + s(\alpha)|)| } u du d\theta.
\end{aligned}\end{equation}
We recall that $H_1^{p}(w_\alpha) =0$ and hence 
$$0 =  \partial_\alpha w_\alpha \cdot \nabla_w H_1^{p} = \frac12 \partial_\alpha |w_\alpha|^2  \Big[ \frac{\mathcal{E}'(p+w_\alpha)}{|p+w_\alpha|} - \frac{\mathcal{E}'(w_\alpha) }{|w_\alpha|}\Big] + |p|^2 \frac{\mathcal{E}'(p+w_\alpha)}{|p+w_\alpha|} $$
which leads to \begin{equation}\label{bd-low123}|p|^2 \frac{\mathcal{E}'(p+w_\alpha)}{|p+w_\alpha|} = \frac12 \partial_\alpha|w_\alpha|^2 \left[\frac{\mathcal{E}'(w_\alpha) }{|w_\alpha|}-\frac{\mathcal{E}'(p+w_\alpha)}{|p+w_\alpha|}\right],\end{equation}
and 
\begin{equation}\label{bd-low123b}
\partial_\alpha |s(\alpha)|^2 = 2\frac{-\alpha|p|^2\frac{\mathcal{E}'(|w_\alpha|)}{|w_\alpha|}+(1+\alpha)|p|^2\frac{\mathcal{E}'(|p+w_\alpha|)}{|p+w_\alpha|}}{\left[\frac{\mathcal{E}'(w_\alpha) }{|w_\alpha|}-\frac{\mathcal{E}'(p+w_\alpha)}{|p+w_\alpha|}\right]}.
\end{equation}

We deduce from \eqref{bd-low123b} that
 \begin{equation*}
\begin{aligned}
 |\nabla  H_1^{p}|^2 \ =  &\  |p|^2\left[-\alpha   \frac{\mathcal{E}'(|w_\alpha|) }{|w_\alpha|} +(\alpha+1)  \frac{\mathcal{E}'(|p+w_\alpha|)}{|p+w_\alpha|}\right]^2 \ + |s(\alpha)|^2 \left[\frac{\mathcal{E}'(|p +w_\alpha|)}{|p+w_\alpha|} - \frac{\mathcal{E}'(|w_\alpha|) }{|w_\alpha|}\right]^2\\
  =  &\  \frac{\left|\partial_\gamma |s(\alpha)|^2\right|^2}{4|p|^2}\left[\frac{\mathcal{E}'(|p +w_\alpha|)}{|p+w_\alpha|} - \frac{\mathcal{E}'(|w_\alpha|) }{|w_\alpha|}\right]^2 \ + |s(\alpha)|^2 \left[\frac{\mathcal{E}'(|p +w_\alpha|)}{|p+w_\alpha|} - \frac{\mathcal{E}'(|w_\alpha|) }{|w_\alpha|}\right]^2,
\end{aligned}
\end{equation*}
which implies 
\begin{equation*}
\begin{aligned}
 \frac{d \sigma(w)}{|\nabla H_1^{p}(w)|} \;  = C \frac{|p|}{\partial_\alpha |w_\alpha|^2\left[\frac{\mathcal{E}'(|w_\alpha|)}{|w_\alpha|}-\frac{\mathcal{E}'(|p+w_\alpha|)}{|p+w_\alpha|}\right] } u dud\theta.
\end{aligned}\end{equation*}
The above and \eqref{bd-low123} yield
\begin{equation*}
\begin{aligned}
\frac{ d \sigma(w)}{|\nabla H_1^{p}(w)|} \;  = C \frac{u du}{2 |p| \frac{\mathcal{E}'(|p+w|)}{|p+w|}} .
\end{aligned}\end{equation*}
We the obtain
\begin{equation*}
\begin{aligned}
\int_{S_p^1} \frac{ F(|w|)}{|\nabla H_1^{p}(w)|} \; d \sigma(w) = C|p|^{d-2-\gamma}\int_0^\infty  F(u)(1+u/|p|)^{2-\gamma} du.
\end{aligned}\end{equation*}
The lemma follows by the spherical coordinates $dw=|w|^{d-1}d(|w|)d\sigma(\mathbb{S}^d).$ 
\end{proof}

\section{Weighted $L^1$ estimates}\label{Sec:MomentPropa}
In this section, we shall derive uniform estimates on the weighted $L^1$ norm, where weights are $n^{th}$-order monomials of $\cE_k$, which are defined by 
\begin{equation}\label{Def:MomentOrderk}
\mathfrak{M}_n[g]=\int_{\mathbb{R}^d}\cE^n_kg(k)dk
\end{equation}
in which we recall the energy function $\cE_k = |k|^{3/2}$. We stress that our estimates might depend on the positive coefficient of viscosity, but is independent of $\varrho$, in the equation; see \eqref{WeakTurbulenceGeneralized}.

\subsection{Estimate of the collision operator}
We first obtain the following estimate on the collision operator $Q[g]$. 

\begin{lemma}\label{Propo:C12} Let $N>1$. For any positive and radial function $g(p)=g(|p|)$, there exists a constant $C_N$, depending on $N$, such that the following holds 
\begin{equation}\label{Propo:C12:1}\begin{aligned}
& \int_{\mathbb{R}^d}Q[g](k)\cE^N_k \; dk\\
&\le     C_N   \sum_{n=[N/2]}^{N-1}  \Big( \mathfrak{M}_{n+\frac{7-2d}{3}}[g]   \mathfrak{M}_{N-n+\frac{2d-1}{3}}[g]  +  \mathfrak{M}_{n+\frac{11-2d}{3}}[g]   \mathfrak{M}_{N-n+\frac{2d-5}{3}}[g]\Big).
\end{aligned}
\end{equation}
\end{lemma}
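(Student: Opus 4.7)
The plan is to combine the weak formulation of $Q$ (Lemma~\ref{Lemma:WeakFormulation}) with a Povzner-type binomial inequality on the resonance manifold, a termwise bound on the kernel $|V_{k,k_1,k_2}|^2$, and the surface-integral estimate from Lemma~\ref{lem-Sp1}. I first apply Lemma~\ref{Lemma:WeakFormulation} with $\varphi(k)=\mathcal{E}_k^N$ to obtain
\[\int_{\mathbb{R}^d}Q[g]\mathcal{E}_k^N\,dk=\iiint R_{k,k_1,k_2}[g]\bigl[\mathcal{E}_k^N-\mathcal{E}_{k_1}^N-\mathcal{E}_{k_2}^N\bigr]\,dk\,dk_1\,dk_2.\]
The energy delta in $R$ enforces $\mathcal{E}_k=\mathcal{E}_{k_1}+\mathcal{E}_{k_2}$ on the support, so a standard binomial inequality gives $\mathcal{E}_k^N-\mathcal{E}_{k_1}^N-\mathcal{E}_{k_2}^N\leq C_N\sum_{n=1}^{N-1}\mathcal{E}_{k_1}^n\mathcal{E}_{k_2}^{N-n}$, and the $k_1\leftrightarrow k_2$ symmetry of the integrand lets me restrict the sum to $n\geq[N/2]$. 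The non-positive contributions $-gg_1-gg_2$ in $R_{k,k_1,k_2}[g]$ are then dropped (using $g\geq 0$), leaving only $g_1 g_2|V|^2$ integrated against the two deltas.

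Second, I bound the kernel. Using $L_{k_1,k_2}\leq 2|k_1||k_2|$ and $L_{k,-k_j}\leq 2|k||k_j|$ in \eqref{def-VV} and expanding the square,
\[|V_{k,k_1,k_2}|^2\lesssim \frac{|k_1|^{5/2}|k_2|^{5/2}}{|k|^{1/2}}+|k|^{5/2}|k_1|^{5/2}|k_2|^{-1/2}+|k|^{5/2}|k_1|^{-1/2}|k_2|^{5/2},\]
and the last two terms are interchanged by $k_1\leftrightarrow k_2$, giving effectively two essentially distinct kernel contributions after summation-by-symmetrisation. I then integrate out $\delta(k-k_1-k_2)$ by setting $k=k_1+k_2$, reducing the estimate to a double integral over $(k_1,k_2)$ against $\delta(\mathcal{E}_{k_1+k_2}-\mathcal{E}_{k_1}-\mathcal{E}_{k_2})$.

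Third, for fixed $k_1$ the energy delta localises $k_2$ on $S'_{k_1}$; by rotational invariance of $S'_{k_1}$ about $k_1$ together with the radiality of $g$, the integrand depends on $k_2$ only through $|k_2|$, so Lemma~\ref{lem-Sp1} (with $\gamma=3/2$) applies and controls the surface integral by $C|k_1|^{d-7/2}\int_{\mathbb{R}^d}F(|k_2|)(1+|k_2|/|k_1|)^{1/2}|k_2|^{2-d}\,dk_2$. On $S'_{k_1}$, the identity $|k_1+k_2|^{3/2}=|k_1|^{3/2}+|k_2|^{3/2}$ yields $|k_1+k_2|\geq\max(|k_1|,|k_2|)$, which tames the $|k|^{-1/2}$ factor of the first kernel contribution, while $|k_1+k_2|^{5/2}\leq C(|k_1|^{5/2}+|k_2|^{5/2})$ handles the $|k|^{5/2}$ factor of the second. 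Splitting $(1+|k_2|/|k_1|)^{1/2}\leq 1+(|k_2|/|k_1|)^{1/2}$ further decomposes each contribution into two sub-pieces.

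Finally, I multiply by $g(k_1)\mathcal{E}_{k_1}^n\,dk_1$, integrate in $k_1$, and convert the remaining one-dimensional integrals back into moments through $\mathfrak{M}_j[g]=|\mathbb{S}^{d-1}|\int_0^\infty g(r)r^{3j/2+d-1}\,dr$; the sub-pieces (which all share the common total moment index $N+2/3$) then regroup into the two advertised products of moments. I expect the main technical obstacle to be precisely this bookkeeping: through the chain of kernel bound $\to$ surface factor $|k_1|^{d-7/2}$ $\to$ weight $|k_2|^{2-d}$ $\to$ spherical Jacobian, the dimension-dependent shifts must be tracked so that the specific exponents $(7-2d)/3$, $(2d-1)/3$, $(11-2d)/3$, $(2d-5)/3$ in the statement emerge exactly, without overshooting in either the $|k_1|$ or $|k_2|$ direction.
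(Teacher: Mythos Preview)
Your overall strategy matches the paper's: weak formulation, binomial/Povzner expansion on the resonant manifold, drop the negative terms, bound the kernel, and use Lemma~\ref{lem-Sp1} on the surface $S'$. The gap is in your kernel estimate. You use the naive bound $L_{k,-k_j}\le 2|k||k_j|$, which produces the asymmetric pieces $|k|^{5/2}|k_1|^{5/2}|k_2|^{-1/2}$ and $|k|^{5/2}|k_1|^{-1/2}|k_2|^{5/2}$ in $|V|^2$. After replacing $|k|^{5/2}\lesssim |k_1|^{5/2}+|k_2|^{5/2}$, the sub-piece $|k_1|^{5}|k_2|^{-1/2}$ survives; once you push it through Lemma~\ref{lem-Sp1} and multiply by $\mathcal{E}_{k_1}^{n}\mathcal{E}_{k_2}^{N-n}$, the resulting moment pair is $\mathfrak{M}_{n+(2d+3)/3}[g]\,\mathfrak{M}_{N-n+(3-2d)/3}[g]$ (and a companion shifted by $1/3$). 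For $n=N-1$ this is $\mathfrak{M}_{N+2}[g]\,\mathfrak{M}_{0}[g]$ in $d=3$, which is strictly worse than anything in the stated bound and cannot be absorbed into the two advertised products by interpolation (you cannot control $\mathfrak{M}_{N+2}$ by lower moments). Your closing remark that all sub-pieces share total index $N+2/3$ is also off: the pairs in the statement sum to $N+2$.

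What you are missing is the sharper bound on $L_{k,-k_j}$ that uses the resonance $k=k_1+k_2$. Writing
\[
L_{k,-k_1}=|k||k_1|-k\cdot k_1=|k_1|\bigl(|k|-|k_1|\bigr)-k_1\cdot k_2\le 2|k_1||k_2|,
\]
and similarly for $L_{k,-k_2}$, each of the three fractions in \eqref{def-VV} is bounded by $1$ on the resonant set (since $|k_1|,|k_2|\le |k|$), and one obtains the clean estimate $|V_{k,k_1,k_2}|^2\lesssim \mathcal{E}_k\mathcal{E}_{k_1}\mathcal{E}_{k_2}$. With this single symmetric kernel bound, only the factor $(\mathcal{E}_{k_1}+\mathcal{E}_{k_2})\mathcal{E}_{k_1}\mathcal{E}_{k_2}$ enters, and after Lemma~\ref{lem-Sp1} the bookkeeping lands exactly on the indices $(7-2d)/3$, $(2d-1)/3$, $(11-2d)/3$, $(2d-5)/3$ without the overshoot you anticipated.
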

\begin{proof} It is sufficient to prove the lemma for $N$ to be natural numbers. 
By Lemma \ref{Lemma:WeakFormulation}, we have 
$$
\begin{aligned}
\int_{\RR^d}Q[g](k) \cE_k^N \; dk  \ = &\  \iiint_{\mathbb{R}^{3d}} R_{k,k_1,k_2} [g]  \Big [\cE^N_{k}-\cE^N_{k_1}-\cE^N_{k_2} \Big] dk dk_1dk_2 . 
\end{aligned}
$$
Using the resonant conditions  $k = k_1 + k_2$ and $\cE_k = \cE_{k_1} + \cE_{k_2}$, dictated by the Dirac delta functions in $ R_{k,k_1,k_2} [g] $, we can write 
\begin{eqnarray*}
\cE_{k}^N-\cE^N_{k_1}-\cE^N_{k_2} = (\cE_{k_1} + \cE_{k_2})^N-\cE^N_{k_1}-\cE^N_{k_2}=\sum_{n=1}^{N-1} {{N}\choose{n}}\cE^n_{k_1}\cE^{N-n}_{k_2}. 
\end{eqnarray*}
Thus, we obtain 
\begin{equation}\label{QQQ}
\begin{aligned}
\int_{\RR^d}Q[g](k) \cE_k^N \; dk 
 &=  \iint_{\mathbb{R}^{2d}} R_{k_1+k_2,k_1,k_2} [g] \sum_{n=1}^{N-1} {{N}\choose{n}}\cE^n_{k_1}\cE^{N-n}_{k_2} dk_1dk_2 
\\
 &= 
  \iint_{\mathbb{R}^{2d}} R_{k_1+k_2,k_1,k_2} [g] \sum_{n=1}^{[N/2]-1} {{N}\choose{n}}\cE^n_{k_1}\cE^{N-n}_{k_2} dk_1dk_2 
 \\&\quad +  \iint_{\mathbb{R}^{2d}} R_{k_1+k_2,k_1,k_2} [g] \sum_{n=[N/2]}^{N-1} {{N}\choose{n}}\cE^n_{k_1}\cE^{N-n}_{k_2} dk_1dk_2 
\\
 &= :
I_1 + I_2.
 \end{aligned}
\end{equation}
Clearly, due to the symmetry of $k_1$ and $k_2$, it suffices to give estimates on $I_2$. Indeed, we write 
$$
\begin{aligned}
I_1 &=   \iint_{\mathbb{R}^{2d}} R_{k_1+k_2,k_1,k_2} [g] \sum_{n=N-[N/2]+1}^{N} {{N}\choose{N-n}}\cE^{N-n}_{k_1}\cE^{n}_{k_2} dk_1dk_2 ,
\end{aligned}$$
which is in fact $I_2$. We now estimate $I_2$. Recall that 
\begin{equation}\label{re-Kp}\begin{aligned}
R_{k,k_1,k_2} [g]=  4\pi |V_{k,k_1,k_2}|^2\delta(k-k_1-k_2)\delta(\mathcal{E}_k -\mathcal{E}_{k_1}-\mathcal{E}_{k_2})(g_1g_2-gg_1-gg_2)
\end{aligned}
\end{equation}
and the energy surface $S_k'$ is defined as in \eqref{def-Sp}. Thus, using the nonnegativity of $g(k)$, we can drop the last two terms $gg_1 + gg_2$ in \eqref{re-Kp}, yielding 
$$
\begin{aligned}
I_2 & \le  4\pi  \int_{\RR^d}\int_{S_{k_2}'} |V_{k_1+k_2,k_1,k_2}|^2 g_1 g_2 \sum_{n=[N/2]}^{N-1} {{N}\choose{n}}\cE^n_{k_1}\cE^{N-n}_{k_2} \frac{d\sigma(k_1)}{|\nabla H^{k_2}_1(k_1)|}dk_2 ,
\end{aligned}
$$
in which $H_1^k$ is defined as in \eqref{FunctionH}. Let us now estimate the collision kernel $V_{k_1+k_2,k_1,k_2}$, defined as in \eqref{def-VV}. We recall
$$
V_{k,k_1,k_2}  \ = \frac{1}{8\pi \sqrt{2\sigma}} \sqrt{\cE_k \cE_{k_1} \cE_{k_2}} \Big( \frac{L_{k_1,k_2}}{ |k| \sqrt{|k_1| |k_2|}} - \frac{L_{k,-k_1}}{ |k_2| \sqrt{|k| |k_1|}} - \frac{L_{k,-k_2}}{ |k_1| \sqrt{|k| |k_2|}} \Big) 
$$
with $L_{k_1,k_2} = k_1 \cdot k_2 +|k_1| |k_2|$ and $\cE_k = |k|^{3/2}$. It is clear that $|L_{k_1,k_2}|\le 2|k_1| |k_2|$. In addition, the energy identity $\cE_{k}=\cE_{k_1}+\cE_{k_2}$ in particular implies that $|k_1|\le |k|$ and $|k_2|\le |k|$, due to the monotonicity of $\cE_k$. Hence, for $k = k_1 + k_2$, we compute 
\begin{equation}\label{comp-L}
\begin{aligned}
 0 &\le L_{k,-k_1} = |k| |k_1| - k \cdot k_1 = |k_1| ( |k|- |k_1|) - k_1\cdot k_2
 \le 2 |k_1| |k_2|.
 \end{aligned}\end{equation}
The same bound holds for $L_{k,-k_2}$. This proves that 
$$
\begin{aligned}
|V_{k,k_1,k_2}| \le C_0  \sqrt{\cE_k \cE_{k_1} \cE_{k_2}}  \Big( \frac{\sqrt{|k_1| |k_2|}}{ |k| } +  \frac{\sqrt{|k_1|}}{ \sqrt{|k|} } + \frac{\sqrt{|k_2|}}{ \sqrt{|k|} } \Big) 
\end{aligned}$$
for some universal constant $C_0$. Using again $|k|\ge \max\{ |k_1|, |k_2|\}$, we obtain 
\begin{equation}\label{est-Vker}
|V_{k,k_1,k_2} |  \le C_0  \sqrt{\cE_{k} \cE_{k_1} \cE_{k_2}} 
\end{equation}
for all $(k,k_1,k_2)$ satisfying the resonant conditions  $k = k_1+k_2$ and $\cE_k = \cE_{k_1}+ \cE_{k_2}$. 
Hence, we have
$$
\begin{aligned}
I_2
  & \le  C_0  \int_{\RR^d}\int_{S_{k_2}'} \cE_{k_1}\cE_{k_2} (\cE_{k_1} + \cE_{k_2})g_1 g_2 \sum_{n=[N/2]}^{N-1} {{N}\choose{n}}\cE^n_{k_1}\cE^{N-n}_{k_2}\frac{d\sigma(k_1)}{|\nabla H^{k_2}_1(k_1)|}dk_2 
  \\
 & \le  C_0   \sum_{n=[N/2]}^{N-1} {{N}\choose{n}} \int_{\RR^d} g_2 \cE^{N-n+1}_{k_2}  \Big( \int_{S_{k_2}'}  \cE^{n+1}_{k_2} (\cE_{k_1} + \cE_{k_2})g_1\frac{d\sigma(k_1)}{|\nabla H^{k_2}_1(k_1)|}\Big) dk_2 .
  \end{aligned}$$

Next, applying Lemma \ref{lem-Sp1}, with $\gamma = 3/2$, to the surface integral on $S_{k_2}'$ and recalling that $g_1 = g(|k_1|)$, we obtain 
 \begin{equation}\label{SumMoment}\begin{aligned}
 &\int_{S_{k_2}'} g_1\cE^{n+1}_{k_1} (\cE_{k_1} + \cE_{k_2})\frac{d\sigma(k_1)}{|\nabla H^{k_2}_1(k_1)|}
\\ &\le C_0 \int_{\RR^d} g(k) \cE^{n+1}_{k_1} (\cE_{k_1} + \cE_{k_2})\cE_{k_2}^{\frac{2(d-4)}{3}}  (\cE_{k_1}^\frac13 + \cE_{k_2}^\frac13) \cE_{k_1}^{\frac{2(2-d)}{3}} dk_1.
  \end{aligned}\end{equation}

This proves 
$$
\begin{aligned}
I_2
 & \le    
 C_N   \sum_{n=[N/2]}^{N-1} \Big( \mathfrak{M}_{n+1+\frac{4-2d}{3}}[g]   \mathfrak{M}_{N-n+1+\frac{2d-4}{3}}[g]  +  \mathfrak{M}_{n+1+\frac{8-2d}{3}}[g]   \mathfrak{M}_{N-n+1+\frac{2d-8}{3}}[g]\Big).
   \end{aligned}
$$
This proves the lemma. 
%
\end{proof}

\begin{remark}\label{rem-radial} We note that by writing $L_{k,-k_1}$ and $L_{k,-k_2}$ as in \eqref{comp-L}, the kernel $|V_{k,k_1,k_2}|$ is radial in $k$. 
\end{remark}

\subsection{Weighted $L^1_N$ $(N>1)$ estimates}

\begin{proposition}\label{Propo:MomentsPropa} Let $N>1$. Suppose that $f_0(k)=f_0(|k|)$ is a nonnegative radial initial data
satisfying 
$$\int_{\mathbb{R}^d}f_0(k) (\mathcal{E}_k + \cE_k^N) \;dk<\infty.$$
Then,  corresponding nonnegative radial solutions $f(t,k)=f(t,|k|)$ of \eqref{WeakTurbulenceGeneralized}, with $f(0,k) = f_0(k)$, satisfy
\begin{equation}\label{EE-bound}\sup_{t\ge 0} \int_{\mathbb{R}^d}f(t,k)\cE_k^N\;dk \le C_{N}
\end{equation}
for some finite constant $C_{N}$ depending on the initial data and the viscosity. 
 \end{proposition}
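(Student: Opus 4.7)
The plan is to test the equation against the weight $\mathcal{E}_k^N$, extract the viscous dissipation as a high-order moment, and close a super-linear differential inequality for $\mathfrak{M}_N[f(t)]$ by combining Lemma~\ref{Propo:C12} with Lyapunov-type moment interpolation.

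Multiplying \eqref{WeakTurbulenceGeneralized} by $\mathcal{E}_k^N$ and integrating in $k$, noting $|k|^2 = \mathcal{E}_k^{4/3}$ and dropping the nonnegative $\varrho|k|^4$ contribution, one obtains
$$\frac{d}{dt}\mathfrak{M}_N[f] + 2\nu\, \mathfrak{M}_{N+4/3}[f] \le \int_{\mathbb{R}^d} Q[f]\, \mathcal{E}_k^N\, dk.$$
The energy identity \eqref{Coro:ConservatioEnergy} delivers at once the uniform anchor $\mathfrak{M}_1[f(t)] \le \mathfrak{M}_1[f_0]$. A direct inspection of the indices appearing in Lemma~\ref{Propo:C12} shows that every product $\mathfrak{M}_{a_n}[f]\,\mathfrak{M}_{b_n}[f]$ on the right-hand side satisfies $a_n+b_n = N+2$, with each individual index bounded above by $N+\tfrac{4}{3}$ (the extremal value $N+\tfrac{4}{3}$ being attained in dimension $d=2$ at $n=N-1$). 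Invoking the Lyapunov interpolation
$$\mathfrak{M}_a[f] \le \mathfrak{M}_1[f]^{(N+4/3-a)/(N+1/3)}\, \mathfrak{M}_{N+4/3}[f]^{(a-1)/(N+1/3)}, \qquad 1 \le a \le N+\tfrac{4}{3},$$
and using the uniform bound on $\mathfrak{M}_1[f]$, each product is dominated by $C_N\, \mathfrak{M}_{N+4/3}[f]^{N/(N+1/3)}$. Since the exponent $N/(N+\tfrac{1}{3})<1$, Young's inequality absorbs a small multiple of $\mathfrak{M}_{N+4/3}[f]$ into the viscous dissipation on the left, leaving
$$\frac{d}{dt}\mathfrak{M}_N[f] + \nu\, \mathfrak{M}_{N+4/3}[f] \le C_{N,\nu}.$$

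The reverse interpolation $\mathfrak{M}_{N+4/3}[f] \ge c\, \mathfrak{M}_N[f]^{(N+1/3)/(N-1)}$, with exponent strictly greater than $1$ for every $N>1$, converts this into a super-linear differential inequality of the form $\frac{d}{dt}\mathfrak{M}_N[f] + c_{N,\nu}\mathfrak{M}_N[f]^{1+\alpha} \le C_{N,\nu}$ with $\alpha = (4/3)/(N-1)>0$, from which the uniform-in-time bound \eqref{EE-bound} follows by a standard ODE comparison. The main technical obstacle is that in dimension $d=2$ the second family of products in Lemma~\ref{Propo:C12} includes $\mathfrak{M}_{N-n-1/3}[f]$, whose index drops to $\tfrac{2}{3}<1$ at $n=N-1$, so the Lyapunov interpolation above does not apply directly with the $\mathfrak{M}_1$ endpoint. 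This offending term is handled by isolating it as $\mathfrak{M}_{N+4/3}[f]\cdot \mathfrak{M}_{2/3}[f]$ and controlling $\mathfrak{M}_{2/3}[f]$ by a H\"older interpolation against a separately propagated sub-energy moment, which is available under the hypotheses of Theorem~\ref{Theorem:Main} since $f_0\in\mathcal{S}_N \subset L^1_{1/3}$. Once this low-index factor is tamed, the absorption scheme closes without change and yields the claimed propagation of moments.
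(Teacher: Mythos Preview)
Your strategy is genuinely different from the paper's. The paper never tries to absorb the collision term into the viscous dissipation $\nu\,\mathfrak{M}_{N+4/3}$; instead it uses the moment-product interpolation $\mathfrak{M}_a\mathfrak{M}_b\le\mathfrak{M}_2\mathfrak{M}_N$ (for $a+b=N+2$, $2\le a,b\le N$) to bound the right-hand side of Lemma~\ref{Propo:C12} by $C_N\,\mathfrak{M}_2[f]\,\mathfrak{M}_N[f]$, arriving at
\[
\frac{d}{dt}\mathfrak{M}_N[f]\ \le\ C_N\,\mathfrak{M}_2[f]\,\mathfrak{M}_N[f],
\]
and then closes by Gronwall, using the energy identity \eqref{Coro:ConservatioEnergy} which yields $\int_0^\infty\mathfrak{M}_{7/3}[f(s)]\,ds\le(2\nu)^{-1}\mathfrak{M}_1[f_0]$ and hence an integrable-in-time coefficient. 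Your absorption--superlinear-ODE route is a legitimate alternative and, in $d=3$ (where all indices in Lemma~\ref{Propo:C12} lie in $[1,N+4/3]$), it works cleanly and gives the same conclusion without reference to the time integral of $\mathfrak{M}_2$.

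The gap is your handling of the $d=2$ borderline term $\mathfrak{M}_{N+4/3}[f]\,\mathfrak{M}_{2/3}[f]$. Two problems: first, the hypotheses of Proposition~\ref{Propo:MomentsPropa} do not include $f_0\in L^1_{1/3}$, and in the paper's architecture the control of sub-energy moments (Proposition~\ref{Lemma:13Norm}) is proved \emph{after} and \emph{using} the present proposition, so invoking it here is circular. Second, and more seriously, even if $\mathfrak{M}_{2/3}[f]$ were known to be bounded by some constant $C_0$, the offending contribution on the right-hand side becomes $C_N C_0\,\mathfrak{M}_{N+4/3}[f]$, which is \emph{linear} in $\mathfrak{M}_{N+4/3}$. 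Young's inequality cannot absorb a linear term into $\nu\,\mathfrak{M}_{N+4/3}$ unless $C_N C_0<\nu$, a smallness condition you have no mechanism to enforce. So the sentence ``the absorption scheme closes without change'' is not justified. The paper's Gronwall route sidesteps this obstruction precisely because it never needs the top moment $\mathfrak{M}_{N+4/3}$ to appear on the right with an uncontrolled coefficient.
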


We need the following simple lemma. 
\begin{lemma}\label{lem-Holder} For $M>n>p$, there holds
\begin{equation}\label{Holder} \mathfrak{M}_n[g] \le \mathfrak{M}_p^{\frac{M-n}{M-p}}[g] \mathfrak{M}_{M}^{\frac{n-p}{M-p}}[g].\end{equation}
\end{lemma}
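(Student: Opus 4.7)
The plan is to recognize this as a straightforward Hölder interpolation for moments against the positive measure $g(k)\,dk$. Since $g\ge 0$ (implicit from its use as a density, and so that $\mathfrak M_n[g]$ makes sense for all the relevant $n$), we treat $d\mu := g(k)\,dk$ as a (locally) finite positive measure and view $\mathfrak M_n[g]$ as $\int \cE_k^n\,d\mu$.

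The key algebraic step is to choose the interpolation parameter
\[
\theta := \frac{M-n}{M-p}\in(0,1),\qquad 1-\theta = \frac{n-p}{M-p},
\]
so that $p\theta + M(1-\theta) = n$. First I would write
\[
\cE_k^n \;=\; \bigl(\cE_k^{p}\bigr)^{\theta}\bigl(\cE_k^{M}\bigr)^{1-\theta},
\]
which splits the integrand as
\[
\cE_k^n\, g(k) \;=\; \bigl(\cE_k^{p} g(k)\bigr)^{\theta}\bigl(\cE_k^{M} g(k)\bigr)^{1-\theta}.
\]
Then I would apply Hölder's inequality on $\mathbb{R}^d$ with conjugate exponents $1/\theta$ and $1/(1-\theta)$:
\[
\int_{\mathbb{R}^d}\bigl(\cE_k^{p}g\bigr)^{\theta}\bigl(\cE_k^{M}g\bigr)^{1-\theta}\,dk
\;\le\;\Bigl(\int_{\mathbb{R}^d}\cE_k^{p}g\,dk\Bigr)^{\theta}\Bigl(\int_{\mathbb{R}^d}\cE_k^{M}g\,dk\Bigr)^{1-\theta}.
\]
Substituting the definition of $\mathfrak M_n[g]$ on the left and recognizing the two factors on the right as $\mathfrak M_p[g]^{\theta}$ and $\mathfrak M_M[g]^{1-\theta}$ yields exactly \eqref{Holder}.

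There is essentially no obstacle here beyond choosing $\theta$ correctly; the only things to check are that $\theta\in(0,1)$ (which follows from $p<n<M$) and that the integrals on the right are finite, in which case the inequality is meaningful, and otherwise the bound is trivially true. No additional properties of $\cE_k$ (such as its explicit form $|k|^{3/2}$) are used, so the lemma holds for any nonnegative weight.
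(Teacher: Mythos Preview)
Your proof is correct and follows essentially the same approach as the paper: both apply H\"older's inequality with exponents $\frac{M-p}{M-n}$ and $\frac{M-p}{n-p}$ to the splitting $\cE_k^n g = (\cE_k^p g)^{\theta}(\cE_k^M g)^{1-\theta}$. Your version simply spells out the choice of $\theta$ more explicitly.
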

\begin{proof}
The lemma follows from the definition of $\mathfrak{M}_n$ and the following H\"older inequality
\begin{eqnarray*}
\int_{\mathbb{R}^d} g(k)\cE_k^n dk &\le& \left( \int_{\mathbb{R}^d} g(k)\cE_k^p dk \right)^{\frac{M-n}{M-p}} \left( \int_{\mathbb{R}^d} g(k)\cE_k^M dk\right)^{\frac{n-p}{M-p}} .
\end{eqnarray*}
\end{proof}
\begin{proof}[Proof of Proposition \ref{Propo:MomentsPropa}]
Using $\varphi = \cE_{k}^N$ as a test function in \eqref{WeakTurbulenceGeneralized}, we obtain 
$$
\frac{d}{dt}\int_{\mathbb{R}^d}f(t,k)\cE^N_{k}dk + 2\nu  \int_{\RR^d} (|k|^2+\varrho|k|^4) f(t,k)\cE_k^N \; dk =\int_{\mathbb{R}^d}Q[f](k)\cE_{k}^Ndk.
$$
By using Proposition \ref{Propo:C12} and recalling the definition of $\mathfrak{M}_M$, the above yields 
$$\begin{aligned}
 & \frac{d}{dt} \mathfrak{M}_{N}[f(t)] + 2\nu   \mathfrak{M}_{N+\frac43}[f(t)] \le 
\\&\le  
 C_N   \sum_{n=[N/2]}^{N-1}  \Big( \mathfrak{M}_{n+\frac{7-2d}{3}}[f(t)]   \mathfrak{M}_{N-n+\frac{2d-1}{3}}[f(t)]  +  \mathfrak{M}_{n+\frac{11-2d}{3}}[f(t)]   \mathfrak{M}_{N-n+\frac{2d-5}{3}}[f(t)]\Big).
\end{aligned}$$
Now using Lemma \ref{lem-Holder}, with $p=7/3$ and $M = N $, we get 
$$
\begin{aligned}
 \sum_{n=[N/2]}^{N-1} \mathfrak{M}_{n+\frac{7-2d}{3}}[f(t)]   \mathfrak{M}_{N-n+\frac{2d-1}{3}}[f(t)] 
  & \le  
 \sum_{n=[N/2]}^{N-1}\mathfrak{M}_{2} [f(t)]\mathfrak{M}_N[f(t)]
\\  & \le  C_N \mathfrak{M}_{2} [f(t)]\mathfrak{M}_N[f(t)]
    \end{aligned}
$$
and 
$$
\begin{aligned}
 \sum_{n=[N/2]}^{N-2} \mathfrak{M}_{n+\frac{7-2d}{3}}[f(t)]   \mathfrak{M}_{N-n+\frac{2d-1}{3}}[f(t)]
  & \le  
 \sum_{n=[N/2]}^{N-2}\mathfrak{M}_{2} [f(t)]\mathfrak{M}_N[f(t)]
\\  & \le  C_N \mathfrak{M}_{2} [f(t)]\mathfrak{M}_N[f(t)].
    \end{aligned}
$$
We obtain \begin{equation}\label{ineq-EE}\frac{d}{dt}  \mathfrak{M}_{N}[f(t)] + \nu   \mathfrak{M}_{N+\frac43}[f(t)] \le  C_N   \mathfrak{M}_{2} [f(t)]\mathfrak{M}_{N}[f(t)].
\end{equation}
The uniform boundedness of $\mathfrak{M}_{N}[f(t)] $ follows from the standard Gronwall's lemma, upon using the following energy inequality (see \eqref{Coro:ConservatioEnergy}):
$$ \mathfrak{M}_1[f(t)]   + \int_0^t \mathfrak{M}_{2}[f(s)]\; ds \le \mathfrak{M}_1[f_0] .$$
The proposition is proved. 
\end{proof}

\subsection{Weighted $L^1_\frac13$  estimates}

\begin{proposition}\label{Lemma:13Norm} Let $f_0(k)=f_0(|k|)$ be nonnegative and satisfy  
$$\int_{\mathbb{R}^d}f_0(k) \cE_k^{1/3} (1 + \cE_k^{5/3}) \;dk<\infty.$$
Then, corresponding nonnegative radial solutions $f(t,k)=f(t,|k|)$ of \eqref{WeakTurbulenceGeneralized}, with $f(0,k) = f_0(k)$, satisfy
\begin{equation}\label{EE-bound} \int_{\mathbb{R}^d}f(t,k)\cE_k^\frac13\;dk \le c_0e^{c_1t}, \qquad \forall t\ge 0,
\end{equation}
for some universal constants $c_0$, $c_1$  depending on the initial data and the viscosity.
\end{proposition}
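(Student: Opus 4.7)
The plan is to mimic the scheme of Proposition \ref{Propo:MomentsPropa} above, testing \eqref{WeakTurbulenceGeneralized} against $\varphi=\cE_k^{1/3}$, but with a sign reversal coming from the subadditivity of $x\mapsto x^{1/3}$. On the resonance manifold where $\cE_k=\cE_{k_1}+\cE_{k_2}$ the quantity
$$B := \cE_{k_1}^{1/3}+\cE_{k_2}^{1/3}-\cE_k^{1/3}$$
is nonnegative, so in the weak formulation (Lemma \ref{Lemma:WeakFormulation}) the ``gain'' contribution from $f_1f_2$ comes with a favorable sign and may be discarded, leaving
$$\int_{\RR^d} Q[f]\cE_k^{1/3}\,dk \le \iiint 4\pi|V_{k,k_1,k_2}|^2 \delta(k-k_1-k_2)\delta(\cE_k-\cE_{k_1}-\cE_{k_2})(ff_1+ff_2)B\,dk\,dk_1\,dk_2.$$

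I then bound $B\le \cE_{k_1}^{1/3}+\cE_{k_2}^{1/3}$ and invoke the kernel estimate $|V|^2\le C\cE_k\cE_{k_1}\cE_{k_2}$ from \eqref{est-Vker}. Using the symmetry in $k_1\leftrightarrow k_2$ the problem reduces to $\iiint |V|^2\delta\delta\,ff_1 B$; after integrating out $\delta(k-k_1-k_2)$ by setting $k_2=k-k_1$ and converting the remaining energy delta into a surface integral over $S_k$, the bounds $\cE_{k_1},\cE_{k_2}\le\cE_k$ on $S_k$ reduce matters to inner integrals of the form $\int_{S_k} f(k_1)\cE_{k_1}^\beta \,d\sigma(k_1)/|\nabla H_0^k(k_1)|$. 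Since $f$ is radial, Lemma \ref{lem-Sp}(v) converts each such inner integral into a weighted radial integral, which in spherical coordinates becomes a specific moment $\mathfrak{M}_m[f]$.

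Tracking the exponents yields, for $d=2$, $\int Q[f]\cE_k^{1/3}\,dk\le C\mathfrak{M}_1[f]\mathfrak{M}_{4/3}[f]$, and for $d=3$,
$$\int_{\RR^d} Q[f]\cE_k^{1/3}\,dk \le C\bigl(\mathfrak{M}_{2/3}[f]\mathfrak{M}_{5/3}[f] + \mathfrak{M}_{1/3}[f]\mathfrak{M}_2[f]\bigr).$$
Now $\mathfrak{M}_1[f(t)]\le \mathfrak{M}_1[f_0]$ by the energy identity \eqref{Coro:ConservatioEnergy}, and $\mathfrak{M}_2[f(t)]$ is uniformly bounded by Proposition \ref{Propo:MomentsPropa} with $N=2$; the latter is legitimate since the standing hypothesis gives $\mathfrak{M}_2[f_0]<\infty$ together with $\mathfrak{M}_1[f_0]\le\mathfrak{M}_{1/3}[f_0]+\mathfrak{M}_2[f_0]<\infty$. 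Combining with the H\"older interpolations $\mathfrak{M}_{4/3}\le \mathfrak{M}_1^{2/3}\mathfrak{M}_2^{1/3}$, $\mathfrak{M}_{2/3}\le\mathfrak{M}_{1/3}^{1/2}\mathfrak{M}_1^{1/2}$, and $\mathfrak{M}_{5/3}\le\mathfrak{M}_1^{1/3}\mathfrak{M}_2^{2/3}$ (all instances of Lemma \ref{lem-Holder}) furnishes the unified bound
$$\int_{\RR^d} Q[f]\cE_k^{1/3}\,dk \;\le\; c_1\bigl(1+\mathfrak{M}_{1/3}[f]\bigr)$$
in both dimensions. Testing \eqref{WeakTurbulenceGeneralized} against $\cE_k^{1/3}$ and discarding the nonnegative viscous dissipation $2\nu\mathfrak{M}_{5/3}+2\nu\varrho\mathfrak{M}_{11/3}$ on the left, Gronwall applied to the resulting $\frac{d}{dt}\mathfrak{M}_{1/3}[f(t)]\le c_1\bigl(1+\mathfrak{M}_{1/3}[f(t)]\bigr)$ delivers $\mathfrak{M}_{1/3}[f(t)]\le c_0 e^{c_1 t}$.

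The main obstacle is the bookkeeping of the second step: since $\mathfrak{M}_{1/3}$ is of \emph{lower} order than the conserved energy $\mathfrak{M}_1$, the usual interpolation-between-controlled-moments trick does not directly apply. What rescues the argument is that the factor $\cE_k\cE_{k_1}\cE_{k_2}$ in $|V|^2$, combined with $\cE_{k_2}=\cE_k-\cE_{k_1}\le\cE_k$ on the resonance surface $S_k$ and the radial version of the surface bound in Lemma \ref{lem-Sp}(v), keeps the moments appearing on the right hand side at or below $\mathfrak{M}_2$, which is precisely the highest moment controlled a priori by Proposition \ref{Propo:MomentsPropa}.
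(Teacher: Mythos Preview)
Your argument is correct, and it follows a somewhat different route from the paper's own proof. The paper does not exploit the sign of $\cE_k^{1/3}-\cE_{k_1}^{1/3}-\cE_{k_2}^{1/3}$: it simply bounds $\int Q[f]\cE_k^{1/3}\,dk$ by $\iiint |R_{k,k_1,k_2}[f]|(\cE_k^{1/3}+\cE_{k_1}^{1/3}+\cE_{k_2}^{1/3})$, keeps the $f_1f_2$ contribution, and therefore has to treat \emph{two} surface integrals, one over $S_k$ (via Lemma~\ref{lem-Sp}) and one over $S'_{k}$ (via Lemma~\ref{lem-Sp1}), arriving at a bound of the shape $C\|f\|_{L^1_{1/3}}\|f\|_{L^1_2}$ before invoking Proposition~\ref{Propo:MomentsPropa} and Gronwall. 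By contrast, your use of the subadditivity $B=\cE_{k_1}^{1/3}+\cE_{k_2}^{1/3}-\cE_k^{1/3}\ge 0$ on the resonant set lets you drop the $f_1f_2$ term outright, so only the $ff_1+ff_2$ contribution survives, and the surface $S'_k$ of Lemma~\ref{lem-Sp1} is never needed. The bookkeeping of the resulting moment products is then as you wrote, and in fact for $d=2$ your right-hand side is bounded by a constant independent of $\mathfrak{M}_{1/3}$, which is slightly stronger than the paper's linear bound. Both approaches close via Gronwall after invoking Proposition~\ref{Propo:MomentsPropa} with $N=2$ (legitimate under the hypothesis since $\cE_k\le \cE_k^{1/3}+\cE_k^2$); yours is a touch more economical in that it uses one surface lemma instead of two.
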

\begin{proof} By Proposition \ref{Propo:MomentsPropa}, the $L_s^1$-norm of $f$ is bounded for $s\in[1,\frac73]$.
Using $\mathcal{E}_k^{1/3}$ as a test function in \eqref{WeakTurbulenceGeneralized}, we obtain

\begin{align}\label{Lemma:13Norm:E1}
\frac{d}{dt} & \mathfrak{M}_{\frac{1}{3}}[f(t)] + 2\nu   \mathfrak{M}_{\frac53}[f(t)] 
\le \int_{\mathbb{R}^d}Q[f](k)\cE_{k}^\frac13dk.
\end{align}
We now divide the proof into two steps.

~\\
{\bf Step 1: Estimating the collision integral.} We can estimate  the right hand side of \eqref{Lemma:13Norm:E1} as
$$ 
\begin{aligned}
 \int_{\mathbb{R}^d} \cE_k^\frac13Q[f](k)dk
& \le \iiint_{\mathbb{R}^{3d}}  |R_{k,k_1,k_2}[f]| \Big( \cE_k^\frac13 + \cE_{k_1}^\frac13 + \cE_{k_2}^\frac13\Big) \; dkdk_1dk_2,
\end{aligned}
$$
in which,  recall  $$\begin{aligned}
R_{k,k_1,k_2} [g]=  4\pi|V_{k,k_1,k_2}|^2\delta(k-k_1-k_2)\delta(\mathcal{E}_k -\mathcal{E}_{k_1}-\mathcal{E}_{k_2})(f_1f_2-ff_1-ff_2) .
\end{aligned}
$$
By the resonant conditions  $k = k_1 + k_2$ and $\cE_k = \cE_{k_1} + \cE_{k_2}$, we  the  integrals can be re-expressed in terms of the surface integrals over $\RR^d\times S_k$ and $\RR^d \times S_{k_1}'$, as follows
$$ 
\begin{aligned}
\int_{\mathbb{R}^d} \cE_k^\frac13Q[f](k)dk  
&\le 
C\int_{\mathbb{R}^d}\int_{S_{k_1}'}  |V_{k_1+k_2,k_1,k_2}|^2 |f_1f_2|
\Big( \cE_{k_1+k_2}^\frac13 + \cE_{k_1}^\frac13 + \cE_{k_2}^\frac13\Big)  \;   \frac{d \sigma(k_2) }{|\nabla H_1^{k_1}(k_2)|} dk_1
\\
&\quad + 
C\int_{\mathbb{R}^d}\int_{S_k}  |V_{k,k - k_2,k_2}|^2 | ff_2| \Big( \cE_k^\frac13 + \cE_{k-k_2}^\frac13 + \cE_{k_2}^\frac13\Big)  \;   \frac{d \sigma(k_2) }{|\nabla H_0^{k}(k_2)|}dk
\\&=: I_1 + I_2.\end{aligned}
$$

~\\
{\em Estimate on $I_1$.} By the Cauchy-Schwarz inequality and the conservation law $\cE_{k_1+k_2}= \cE_{k_1} + \cE_{k_2}$,  
$$ 
\begin{aligned}
\cE_{k_1+k_2}^\frac13 \le C_N (\cE_{k_1}^\frac13+ \cE_{k_2}^\frac13) \end{aligned}
$$
which then leads to
\begin{equation*}
\begin{aligned}
I_1 
&\le  
C \int_{\mathbb{R}^d}\int_{S_{k_2}'}  |V_{k_1+k_2,k_1,k_2}|^2 |f_1| |f_2|
 (\cE_{k_1}^\frac13 + \cE_{k_2}^\frac13)  \;   \frac{d \sigma(k_1) }{|\nabla H_1^{k_2}(k_1)|}dk_2.
\end{aligned}\end{equation*}
Let us note from \eqref{est-Vker} that $|V_{k,k_1,k_2} |^2  \le C_0\cE_k \cE_{k_1}\cE_{k_2}.$ By Lemma \ref{lem-Sp1} and the same argument used for \eqref{SumMoment}, $I_1$ can be bounded the following way 
\begin{equation}\label{Lemma:13Norm:E2}
\begin{aligned}
I_1 \ \le \  &\int_{\mathbb{R}^d}\int_{S_{k_2}'}  |V_{k_1+k_2,k_1,k_2}|^2 | f_1| |f_2|
 (\cE_{k_1}^\frac13 + \cE_{k_2}^\frac13)  \;   \frac{d \sigma(k_1) }{|\nabla H_1^{k_2}(k_1)|} dk_2
\\&\le C \int_{\mathbb{R}^d}\int_{S_{k_2}'}  \cE_{k_1} \cE_{k_2} | f_1| |f_2|
 (\cE_{k_1}^{\frac13} + \cE_{k_2}^{\frac13}) (\cE_{k_1} + \cE_{k_2})  \;   \frac{d \sigma(k_1) }{|\nabla H_1^{k_2}(k_1)|}dk_2
\\
&\le C\iint_{\mathbb{R}^{2d}} \cE_{k_1}\cE_{k_2}  | f_1| |f_2|
 (\cE_{k_1}^{\frac13} + \cE_{k_2}^{\frac13})  (\cE_{k_1}^{\frac{2d-4}{3}}\cE_{k_2}^{\frac{4-2d}{3}} + \cE_{k_1}^{\frac{2d-8}{3}}\cE_{k_2}^{\frac{8-2d}{3}})  \;  dk_1dk_2
\\
&\le C 7\|f\|_{L^1_\frac13}\|f\|_{L^1_2}.
\end{aligned}\end{equation}

~\\
{\em Estimate on $I_2$.} We turn to estimate $I_2$. Again, recalling $\cE_k = \cE_{k-k_2} + \cE_{k_2} \ge \max\{ \cE_{k-k_2}, \cE_{k_2}\}$, we bound 
\begin{equation*}
\begin{aligned}
I_2 
&\le C \int_{\mathbb{R}^d}\int_{S_k}  |V_{k,k - k_2,k_2}|^2 |ff_2| \Big( \cE_k^\frac13 + \cE_{k-k_2}^\frac13 + \cE_{k_2}^\frac13\Big)  \;  \frac{d \sigma(k_2) }{|\nabla H_0^{k}(k_2)|}dk.
\end{aligned}\end{equation*}
Notice that $|V_{k,k - k_2,k_2}|^2 \le C_0 \cE_k \cE_{k-k_2} \cE_{k_2} \le C_0 \cE_k^2 \cE_{k_2}$. By Lemma \ref{lem-Sp},  the following holds true
\begin{equation}\label{Lemma:13Norm:E3}
\begin{aligned}
&
\int_{\mathbb{R}^d}\int_{S_k}  |V_{k,k - k_2,k_2}|^2 |f| |f_2|  \cE_k^\frac13   \;  \frac{d \sigma(k_2) }{|\nabla H_0^{k}(k_2)|}dk
\\
&\le C_0 \int_{\mathbb{R}^d}\int_{S_k}  \cE_k^2 \cE_{k_2} |f| |f_2| \cE_k^\frac13 \;  \frac{d \sigma(k_2) }{|\nabla H_0^{k}(k_2)|}dk
\\
&\le C_0 \iint_{\mathbb{R}^{2d}} \cE_{k_2} |k_2|^{2-d}|f| |f_2|  \cE_k^{\frac73}|k|^{d-\frac72}  \;  dk dk_2
\\
&\le C_M\|f\|_{L^1_{\frac{2d}{3}}}\|f\|_{L^1_{\frac{7-2d}{3}}}\le C_M\|f\|_{L^1_{2}}\|f\|_{L^1_{\frac13}}.
\end{aligned}\end{equation}

Combining \eqref{Lemma:13Norm:E2}-\eqref{Lemma:13Norm:E3} and using the fact that the $L_s^1$-norm of $f$ is bounded for $s\in[1,2]$, we obtain 
\begin{equation}\label{Lemma:13Norm:E4}
\begin{aligned}
\int_{\mathbb{R}^d} \cE_k^\frac13Q[f](k)dk  
&\le 
C^*\|f\|_{L^1_{\frac13}}.\end{aligned}
\end{equation}

~\\
{\bf Step 2: Estimating the $L^1_{\frac13}$-norm.} Putting together the two estimates \eqref{Lemma:13Norm:E1} and \eqref{Lemma:13Norm:E4} yields

\begin{align}\label{Lemma:13Norm:E5}
\frac{d}{dt} & \mathfrak{M}_{\frac{1}{3}}[f(t)] + 2\nu   \mathfrak{M}_{\frac53}[f(t)] 
\le C^*\mathfrak{M}_{\frac{1}{3}}[f(t)],
\end{align}
which implies the bound on $\mathfrak{M}_{\frac{1}{3}}[f(t)]$.

The proof of the lemma is complete. 
\end{proof}

\section{$L^2$ estimates}\label{Sec:L2}

\begin{proposition}\label{Pro:L2}
Suppose that $f_0(k)=f_0(|k|)$ is a nonnegative radial initial data
with 
$$\int_{\mathbb{R}^d}f_0(k) \mathcal{E}_k (1+\cE_k^{{\frac{2d-4}{3}}})\;dk<\infty$$
and 
$$\int_{\mathbb{R}^d}|f_0(k)|^2\;dk<\infty.$$
Then, corresponding nonnegative radial solutions $f(t,k)=f(t,|k|)$ of \eqref{WeakTurbulenceGeneralized}, with $f(0,k) = f_0(k)$, satisfy
\begin{equation}\label{EE-bound}\int_{\mathbb{R}^d}|f(t,k)|^2\;dk \le c_0e^{c_1t}.
\end{equation}
for some universal constants $c_0$, $c_1$  depending on the initial data and the viscosity.
\end{proposition}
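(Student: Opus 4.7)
Multiply equation \eqref{WeakTurbulenceGeneralized} by $f(t,k)$ and integrate in $k\in\RR^d$ to obtain the $L^2$ energy identity
\begin{equation*}
\tfrac12\tfrac{d}{dt}\|f(t)\|_{L^2}^2 + 2\nu\int_{\RR^d}(|k|^2+\varrho|k|^4)\,f^2\,dk \;=\; \int_{\RR^d} Q[f](k)\,f(k)\,dk.
\end{equation*}
The plan is to control the right-hand side by $C(t)\,\|f(t)\|_{L^2}^2$ plus a fraction of the viscous dissipation absorbable on the left, with $C(t)$ depending only on moments of $f$ already controlled by Propositions~\ref{Propo:MomentsPropa} and~\ref{Lemma:13Norm}. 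A Gronwall argument then delivers \eqref{EE-bound}. Note that the hypothesis $\int f_0\,\mathcal{E}_k(1+\mathcal{E}_k^{(2d-4)/3})\,dk<\infty$ guarantees $f_0\in L^1_1\cap L^1_{(2d-1)/3}$, so moments of order $\ge 1$ are uniformly bounded and $\mathfrak{M}_{1/3}[f(t)]\le c_0 e^{c_1 t}$.

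\textbf{Reducing to a weighted $L^2$ bound.} Substituting the surface representation \eqref{WT-Q} into $\int Q[f]f\,dk$ and expanding $R_{k,k_1,k_2}[f]=4\pi|V|^2(f_1 f_2-ff_1-ff_2)$, the integral decomposes into six trilinear contributions. The three pieces that are quadratic in $f(k)$ and carry a minus sign are nonpositive and may be discarded. To each of the three remaining positive ``gain'' integrals of the form $\iint|V|^2 f(k)f(k_1)f(k_2)\,d\sigma\,dk$, I apply Young's inequality $|ab|\le\tfrac12(a^2+b^2)$ to the pair containing $f(k)$, together with the $w\leftrightarrow k-w$ symmetry of $S_k$ (Lemma~\ref{lem-Sp}(iii)), the symmetry of $|V|$ in its last two arguments (Remark~\ref{rem-radial}), and the fact that on $S_p'$ the length $|p+w|=(|p|^{3/2}+|w|^{3/2})^{2/3}$ is a function of $(|p|,|w|)$ alone, so the inner integrand is effectively radial in $w$. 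Using $|V|^2\le C\mathcal{E}_k\mathcal{E}_{k_1}\mathcal{E}_{k_2}\le C\mathcal{E}_y^2\mathcal{E}_w$ from \eqref{est-Vker} together with Lemmas~\ref{lem-Sp}(v) and~\ref{lem-Sp1}, each resulting term is dominated by
\begin{equation*}
CM(t)\int_{\RR^d} f(k)^2\,|k|^{d-1/2}\,dk,
\end{equation*}
where $M(t)$ is a moment of $f(t)$ of order $\in\{1/3,1\}$ and is therefore bounded uniformly or at most exponentially in $t$.

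\textbf{Absorption and the main obstacle.} For $d=2$ we have $|k|^{3/2}\le 1+|k|^2$; the $|k|^2 f^2$ piece is absorbed by $2\nu\int|k|^2 f^2$, and the remainder contributes $CM\|f\|_{L^2}^2$. For $d=3$ with $\varrho>0$, the weight $|k|^{5/2}\le C_\varrho(1+\varrho|k|^4)$ is absorbed by the stronger dissipation $2\nu\varrho\int|k|^4 f^2$. The principal obstacle is the borderline case $d=3$, $\varrho=0$, in which the weight $|k|^{5/2}$ is of strictly higher order than the available dissipation $|k|^2$: a naive truncation at radius $R$ fails because $|k|^{5/2}$ is unbounded on $\{|k|>R\}$. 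To close this case I plan to combine the H\"older interpolation
\begin{equation*}
\int f^2|k|^{5/2}\,dk \;\le\;\Big(\int f^2|k|^2\,dk\Big)^{3/4}\Big(\int f^2|k|^4\,dk\Big)^{1/4}
\end{equation*}
with a Cauchy--Schwarz-type control of the missing $\int f^2|k|^4\,dk$ by $\|f\|_{L^2}$ and bounded high-order moments $\mathfrak{M}_n[f]$ from Proposition~\ref{Propo:MomentsPropa}; Young's inequality then absorbs the resulting $\int|k|^2 f^2$ piece into the dissipation and leaves an at-most exponentially growing coefficient of $\|f\|_{L^2}^2$. In all cases the outcome is a differential inequality $\tfrac{d}{dt}\|f(t)\|_{L^2}^2\le C(t)\|f(t)\|_{L^2}^2$ with $C(t)$ bounded locally in $t$, and Gronwall's lemma delivers \eqref{EE-bound}.
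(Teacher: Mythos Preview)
Your overall strategy (energy identity, bound $\int Q[f]f\,dk$ by a weighted $\int f^2$ plus absorbable dissipation, Gronwall) matches the paper. The gap is in how you distribute the square. By working with the surface representation \eqref{WT-Q} and applying Young's inequality to a pair \emph{containing} $f(k)$, you force the square onto the outer variable $k$ and end up with $\int f^2|k|^{d-1/2}\,dk$. For $d=3,\ \varrho=0$ this weight $|k|^{5/2}$ genuinely exceeds the available dissipation $|k|^2$, and your proposed interpolation repair does not close: there is no way to control $\int f^2|k|^4\,dk$ by $\|f\|_{L^2}$ and the linear moments $\mathfrak{M}_n[f]=\int f\,\cE_k^n$, since the latter carry only one factor of $f$ and you have no $L^\infty$ bound on $f$ to peel the second one off. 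So the differential inequality you aim for cannot be obtained along this route.

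The paper avoids this obstacle entirely by first symmetrizing with Lemma~\ref{Lemma:WeakFormulation},
\[
\int_{\RR^d}Q[f]f\,dk=\iiint_{\RR^{3d}}R_{k,k_1,k_2}[f]\,(f-f_1-f_2)\,dk\,dk_1\,dk_2,
\]
which produces a \emph{single} $\RR^d\times S_k$ integral (no $S_k'$ part). Then, using positivity, one expands directly:
\[
(f_1f_2-ff_1-ff_2)(f-f_1-f_2)\ \le\ 3ff_1f_2+ff_1^2+ff_2^2\ \le\ \tfrac52\,f\,(f_1^2+f_2^2),
\]
so no Young step is needed and, crucially, the square lands on the \emph{surface} variable $k_2$ (after the $k_1\!\leftrightarrow\!k_2$ symmetry). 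Applying Lemma~\ref{lem-Sp}(v) to $\int_{S_k}\cE_{k_2}f_2^2\,d\sigma/|\nabla H_0^k|$ then yields the weight $|k_2|^{7/2-d}$ on $f^2$ and the weight $|k|^{d-1/2}$ on the single factor $f$, giving
\[
\int_{\RR^d}Q[f]f\,dk\ \le\ C\,\mathfrak{M}_{(2d-1)/3}[f]\ \int_{\RR^d}|k|^{7/2-d}f^2\,dk.
\]
Since $7/2-d<2$ for $d=2,3$, the function $C|k|^{7/2-d}-\nu|k|^2$ is bounded above uniformly in $k$, and Gronwall closes immediately for \emph{all} $\varrho\ge 0$. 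In short: symmetrize first and let the sign structure (not Young on $f(k)$) decide where the square goes; this swaps the roles of your $|k|^{d-1/2}$ weight and the moment, and the ``main obstacle'' disappears.
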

\begin{proof}
Using $f$ as a test function in \eqref{WeakTurbulenceGeneralized}, we obtain the following identity
\begin{equation}\label{Pro:L2:E1}
\frac{1}{2}\frac{d}{dt}\int_{\mathbb{R}^{d}}f^2dk  \ + \ \nu\int_{\mathbb{R}^{d}}(|k|^2+\varrho |k|^4)f^2dk \  = \ \int_{\mathbb{R}^{d}}Q[f]fdk. 
\end{equation}
As an application of Lemma \ref{Lemma:WeakFormulation}, the right hand side of \eqref{Pro:L2:E1} could be expressed as
\begin{equation}\label{Pro:L2:E2}
\begin{aligned}
\int_{\mathbb{R}^{d}}Q[f]fdk \  &= \ 4\pi\iiint_{\mathbb{R}^{3d}}|V_{k,k_1,k_2}|^2\delta(k-k_1-k_2)\delta(\mathcal{E}_k-\mathcal{E}_{k_1}-\mathcal{E}_{k_2})
\\&\quad \times (f_1f_2-ff_1-ff_2)(f-f_1-f_2)dk_1dk_2dk. 
\end{aligned}\end{equation}
By taking into account the positivity of $f$, 
the term inside the integral of \eqref{Pro:L2:E2} can be bounded by removing all the terms containing the negative sign, giving 
\begin{equation*}
\begin{aligned}
(f_1f_2-ff_1-ff_2)(f-f_1-f_2) \ 
&\le 3ff_1f_2 \ + \ ff_1^2 \ + \ ff_2^2
\\&\le \frac 52 f (f_1^2 \ + \ f_2^2).  
\end{aligned}\end{equation*}
Inserting the  above inequality into \eqref{Pro:L2:E2} and using the symmetry in $k_1$ and $k_2$, we find
\begin{equation*}
\begin{aligned}
\int_{\mathbb{R}^{d}}Q[f]fdk \ &\le  \ C\iiint_{\mathbb{R}^{3d}}|V_{k,k_1,k_2}|^2\delta(k-k_1-k_2)\delta(\mathcal{E}_k-\mathcal{E}_{k_1}-\mathcal{E}_{k_2}) f(f_1^2 \ + \ f_2^2)dk_1dk_2dk
\\
 \ &\le  \ C\iiint_{\mathbb{R}^{3d}}|V_{k,k_1,k_2}|^2\delta(k-k_1-k_2)\delta(\mathcal{E}_k-\mathcal{E}_{k_1}-\mathcal{E}_{k_2})ff_2^2dk_1dk_2dk. 
\end{aligned}\end{equation*}
Then, again using the definition of the Dirac functions $ \delta(k-k_1-k_2)$ and $\delta(\mathcal{E}_k-\mathcal{E}_{k_1}-\mathcal{E}_{k_2})$, we obtain 
\begin{equation*}
\int_{\mathbb{R}^{d}}Q[f]fdk \ \le  \ C\int_{\mathbb{R}^{d}}\int_{S_k}|V_{k,k_1,k_2}|^2ff_2^2 \; \frac{d \sigma(k_2) }{|\nabla H_0^{k}(k_2)|}dk. 
\end{equation*}
Recall that $|V_{k,k_1,k_2}|^2$ is bounded by $C\mathcal{E}_k\mathcal{E}_{k_1}\mathcal{E}_{k_2}$, and on the surface $S_k$, $\mathcal{E}_{k-k_2}\le \mathcal{E}_k$ and $\mathcal{E}_{k_2}\le \mathcal{E}_k$. This together with Lemma \ref{lem-Sp} yields
\begin{equation*}
\begin{aligned}
\int_{\mathbb{R}^{d}}Q[f]fdk \ &\le  \ C\int_{\mathbb{R}^{d}}\int_{S_k}\mathcal{E}_k^2\mathcal{E}_{k_2}ff_2^2\; \frac{d \sigma(k_2) }{|\nabla H_0^{k}(k_2)|} dk. 
\\
\ &\le  \ C\iint_{\mathbb{R}^{2d}}\mathcal{E}_k^2\mathcal{E}_{k_2}|k|^{-\frac72+d}|k_2|^{2-d}ff_2^2dk_2dk
\\
\ &\le  \ C\Big(\int_{\mathbb{R}^{d}} \mathcal{E}_k^\frac{2d-1}{3}f\; dk \Big) \Big( \int_{\RR^d} |k|^{\frac{7-2d}{3}} |f|^2 \;dk\Big). 
\end{aligned}
\end{equation*}
Now, by  interpolating the results of Proposition \ref{Propo:MomentsPropa}, the $L_{\frac{2d-1}{3}}^1$ norm of $f$ is bounded. Hence, 
\begin{equation}\label{Pro:L2:E3}
\int_{\mathbb{R}^{d}}Q[f]fdk \ \le  \ C\int_{\mathbb{R}^{d}}|k|^\frac12f^2dk. 
\end{equation}
Putting this into \eqref{Pro:L2:E1} yields
\begin{equation}\label{Pro:L2:E4}
\frac{d}{dt}\int_{\mathbb{R}^{d}}f^2dk \  \le \ \int_{\mathbb{R}^{d}}\left(C|k|^{\frac{7-2d}{3}}- \nu |k|^2\right)f^2dk. 
\end{equation}
Let us note that the function $\rho(x)=C x^{\frac{7-2d}{3}}-\nu x^2$, $d=2,3$, $x\in\mathbb{R}_+$ is bounded from above by some positive constant $C_1$ (depending on $\nu$). This 
proves \begin{equation}\label{Pro:L2:E5}
\frac{d}{dt}\int_{\mathbb{R}^{d}}f^2dk \  \le \ C_1\int_{\mathbb{R}^{d}}f^2dk, 
\end{equation}
which yields the proposition.
\end{proof}

\section{Holder estimates for $Q[f]$}\label{Sec:HolderEstimate}

In this section, we study the H\"older continuity of the collision operator $Q[f]$ with respect to weighted $L^1_N$ norm:
$$ \| f\|_{L^1_N} = \int_{\RR^d} f(k) \cE_k^N \; dk. $$

\begin{proposition}\label{Propo:HolderC12} Let $M,N\ge 1$, and let $\mathcal{S}_M$ be any bounded subset of $L_1^1(\RR^d) \cap L^1_{N+3}(\RR^d)$, with $L^1_1$ and $L^1_{N+3}$ norms bounded by $M$. Then, there exists a constant $C_{M,N}$, depending on $M,N$, so that 
\begin{equation}\label{Propo:HolderC12:1} 
\|Q[g]-Q[h]\|_{L^1_{N}} + \|Q[g]-Q[h]\|_{L^1_\frac13} \le C_{M,N}  \Big( \| g-h\|_{L^1_N} + \| g - h\|_{L^1_{\frac 13}} \Big)^{\frac13} 
\end{equation}
for all $g,h\in \mathcal{S}_M$. 
\end{proposition}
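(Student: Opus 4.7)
The plan is to combine the bilinear structure of the collision operator with the weighted moment estimates of Sections~\ref{Sec:Energy}--\ref{Sec:MomentPropa}, and to close the H\"older estimate by interpolation against the uniform bound $\|g\|_{L^1_{N+3}} + \|h\|_{L^1_{N+3}} \le 2M$.

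First I would use, for each of the three pairs appearing in $R_{k,k_1,k_2}[g] - R_{k,k_1,k_2}[h]$, the splittings
$g_1 g_2 - h_1 h_2 = (g_1 - h_1) g_2 + h_1 (g_2 - h_2)$ and $gg_j - hh_j = (g - h) g_j + h(g_j - h_j)$. Taking absolute values produces six terms, each a product of $|V_{k,k_1,k_2}|^2 \delta(k - k_1 - k_2)\delta(\cE_k - \cE_{k_1} - \cE_{k_2})$ with one factor $|g-h|$ and one factor from $\{g, h\}$, evaluated at different momenta. After invoking the surface representation \eqref{WT-Q}, the kernel bound \eqref{est-Vker}, and the surface integral estimates of Lemmas~\ref{lem-Sp}(v) and~\ref{lem-Sp1}, the computation proceeds exactly as in Lemma~\ref{Propo:C12} for the $L^1_N$ piece and as in Proposition~\ref{Lemma:13Norm} for the $L^1_{1/3}$ piece. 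The outcome is a finite sum of products of the form
\[
\mathfrak{M}_{\alpha}[|g - h|] \cdot \mathfrak{M}_{\beta}[u], \qquad u \in \{g, h\},
\]
with $\alpha + \beta \le N + 2$ in the $L^1_N$ bound and $\alpha + \beta \le 7/3$ in the $L^1_{1/3}$ bound, and both weights lying in ranges dictated by those computations.

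Next I would exploit the remaining freedom in the bilinear decomposition: for each such product one may choose which factor carries $|g-h|$ and which carries $u$. I would pick the assignment so that $\beta \in [1, N+3]$, giving $\mathfrak{M}_{\beta}[u] \le C_M$ by Lemma~\ref{lem-Holder}, and so that $\alpha \in [1/3, N+3]$. Interpolation (Lemma~\ref{lem-Holder}) between the endpoints $(p, M_0) = (1/3, N+3)$ or $(p, M_0) = (N, N+3)$ then gives
\[
\mathfrak{M}_{\alpha}[|g-h|] \le (2M)^{1-\theta(\alpha)}\bigl(\|g-h\|_{L^1_{1/3}} + \|g-h\|_{L^1_N}\bigr)^{\theta(\alpha)}, \qquad \theta(\alpha) \ge \tfrac{1}{3},
\]
with equality $\theta(\alpha) = 1/3$ achieved exactly in the extremal case $\alpha = N+2$ interpolated between $L^1_N$ and $L^1_{N+3}$. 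Using the elementary inequality $x^{\theta} \le (2M)^{\theta - 1/3} x^{1/3}$, valid for $0 \le x \le 2M$ and $\theta \ge 1/3$, every term collapses to the common exponent $1/3$ at the cost of an $M$-dependent constant, and summing over the finitely many terms yields \eqref{Propo:HolderC12:1}.

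The main obstacle is the bookkeeping of the previous step: one must verify that the bilinear symmetry has enough slack so that for every configuration of weights produced by the surface estimates, the problematic slots (either $\beta < 1$, as happens already for $d = 2$, $n = N-1$ in the proof of Lemma~\ref{Propo:C12} where the weight $2/3$ appears, or $\alpha$ close to $N + 2$) can be allocated to $|g-h|$, while the bounded factor $u$ is placed on a slot with weight in $[1, N+3]$. The constraint $\alpha + \beta \le N + 2$ together with each weight at most $N + 4/3$ leaves precisely enough room, and the extremal case $\alpha = N + 2$ is what pins the H\"older exponent in \eqref{Propo:HolderC12:1} down to $1/3$.
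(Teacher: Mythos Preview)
Your approach---bilinear splitting of $Q[g]-Q[h]$, the kernel bound \eqref{est-Vker}, the surface estimates of Lemmas~\ref{lem-Sp} and~\ref{lem-Sp1}, and then interpolation against the uniform $L^1_{N+3}$ bound---is the paper's. The paper packages it more economically: it first isolates a linear (Lipschitz) estimate, Lemma~\ref{lem-QQ},
\[
\|Q[g]-Q[h]\|_{L^1_N} \le C_{M,N}\bigl(\|g-h\|_{L^1_{1/3}} + \|g-h\|_{L^1_{N+2}}\bigr),
\]
and then a \emph{single} interpolation $\|g-h\|_{L^1_{N+2}} \le \|g-h\|_{L^1_N}^{1/3}\|g-h\|_{L^1_{N+3}}^{2/3} \le C_M\|g-h\|_{L^1_N}^{1/3}$ pins the H\"older exponent at $1/3$, with no term-by-term bookkeeping.

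One caveat on your last paragraph: the device you propose for handling slots with $\beta<1$---reassigning which factor carries $|g-h|$---does not actually work. The bilinear identity $g_1g_2-h_1h_2=(g_1-h_1)g_2+h_1(g_2-h_2)$ produces \emph{both} configurations, and after the surface estimate for $d=3$ one genuinely gets a term $\mathfrak{M}_{N+5/3}[|g-h|]\,\mathfrak{M}_{1/3}[u]$ that cannot be traded away by symmetry (its partner $\mathfrak{M}_{1/3}[|g-h|]\,\mathfrak{M}_{N+5/3}[u']$ also appears and also must be bounded). The paper's route around this is simply to assume an $L^1_{1/3}$ bound on $g,h$ in Lemma~\ref{lem-QQ}; note that the working set $\mathcal{S}_N$ in the main theorem carries exactly such a bound, so the application goes through.
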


We first prove the following lemma. 
\begin{lemma}\label{lem-QQ} Let $M,N>0$, and let $\mathcal{S}_M$ be any bounded subset of $L_{\frac13}^1(\RR^d) \cap L^1_{N+2}(\RR^d)$, with $L^1_{\frac13}$ and $L^1_{N+2}$ norms bounded by $M$. Then, there exists a constant $C_{M,N}$, depending on $M,N$, so that 
\begin{equation}\label{Q-L1bound} 
\|Q[g]-Q[h]\|_{L^1_{N}} \le C_{M,N}  \Big( \| g-h\|_{L^1_{\frac13}} + \| g - h\|_{L^1_{N+2}} 
\Big)
\end{equation}
for all $g,h\in \mathcal{S}_M$. 
\end{lemma}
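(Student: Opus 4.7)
The plan is to linearize $Q[g]-Q[h]$ in $g-h$ and then bound each bilinear piece using the surface integral machinery of Section \ref{Sec:Scaling}. Since the bound to be proved is linear rather than Hölder, no interpolation is required at this stage; this lemma will later feed into the Hölder estimate of Proposition \ref{Propo:HolderC12} via a standard truncation argument.

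First I would exploit the bilinearity of $R_{a,b,c}[f]$ in $f$, together with the elementary identities $g_b g_c - h_b h_c = (g-h)_b g_c + h_b (g-h)_c$ and the analogous identities for the two loss terms, to write
$$R_{a,b,c}[g]-R_{a,b,c}[h] = 4\pi |V_{a,b,c}|^2\,\delta(a-b-c)\,\delta(\cE_a-\cE_b-\cE_c)\, T_{a,b,c}(g-h;g,h),$$
where $T_{a,b,c}$ is a sum of six bilinear terms of the form $\pm(g-h)_i\, f_j$ with $i,j\in\{a,b,c\}$ and $f\in\{g,h\}$. Summing the three contributions $R_{k,k_1,k_2}$, $R_{k_1,k,k_2}$, $R_{k_2,k,k_1}$ composing $Q$ and taking absolute values yields the pointwise bound
$$|Q[g](k)-Q[h](k)| \le C \iint_{\RR^{2d}} |V_{k,k_1,k_2}|^2\,\delta(k-k_1-k_2)\,\delta(\cE_k-\cE_{k_1}-\cE_{k_2})\, \Phi(g-h,g,h)\,dk_1dk_2,$$
where $\Phi$ is a sum of eighteen nonnegative terms of the form $|(g-h)_i|\, f_j$.

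Next I would multiply by $\cE_k^N$ and integrate in $k$. The momentum delta $\delta(k-k_1-k_2)$ collapses one integration, and depending on which variable the factor $(g-h)_i$ sits at, I would reduce the energy delta to a surface integral on either $S_k$ or $S'_{k_2}$ (or $S'_{k_1}$) via Lemmas \ref{lem-Sp} and \ref{lem-Sp1}. The kernel bound $|V|^2\le C_0\cE_k\cE_{k_1}\cE_{k_2}$ from \eqref{est-Vker}, the resonance identity $\cE_k=\cE_{k_1}+\cE_{k_2}$ giving $\cE_k^{N+1}\le C_N(\cE_{k_1}^{N+1}+\cE_{k_2}^{N+1})$, and the polynomial weight $\cE_{k_2}^{(2d-7)/3}(1+|k_1|/|k_2|)^{1/2}$ appearing in Lemma \ref{lem-Sp1} convert each piece into an absolutely convergent iterated integral in two $\RR^d$-variables. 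After pairing the resulting weights appropriately, every term takes the form $\|g-h\|_{L^1_a}\cdot\|f\|_{L^1_b}$ with $a\in\{\tfrac13,N+2\}$ and $\|f\|_{L^1_b}\le C_M$ for $f\in\{g,h\}\subset\mathcal{S}_M$. Summing the finitely many pieces gives \eqref{Q-L1bound}.

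The main obstacle will be the bookkeeping of weights: unlike in the weak-formulation proof of Proposition \ref{Propo:C12}, which relies on the cancellation between the three test-function pieces, here each bilinear term must be controlled in isolation, and this makes the weight accounting more delicate. Concretely, whenever the factor $(g-h)_i$ sits at a low-weight variable I must bound it in $L^1_{1/3}$ and push the remaining weights onto the $g$ or $h$ factor (bounded by $M$ in $L^1_{N+2}$), while when $(g-h)$ sits at the high-weight variable I use $L^1_{N+2}$ on it and low-weight moments on the other; the splitting $\cE_k^N\le C_N(\cE_{k_1}^N+\cE_{k_2}^N)$ on resonance has to be arranged so that every term falls into one of these two cases. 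The low moment $L^1_{1/3}$ is precisely what is needed to absorb the $|k|^{2-d}$ near-origin singularity produced by the surface parametrization of $S_p$ in $d=2$.
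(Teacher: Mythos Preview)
Your plan is correct and follows the same strategy as the paper---linearize in $g-h$, reduce via the delta functions to surface integrals on $S_k$ and $S'_{k}$, invoke the kernel bound \eqref{est-Vker}, and apply Lemmas \ref{lem-Sp}--\ref{lem-Sp1} to land on products of weighted $L^1$ norms. The one place you diverge from the paper is organizational, and it costs you a fair amount of bookkeeping. You propose to handle all three pieces $R_{k,k_1,k_2}$, $R_{k_1,k,k_2}$, $R_{k_2,k,k_1}$ separately, generating eighteen bilinear terms; your stated reason is that the weak-formulation symmetrization of Proposition \ref{Propo:C12} ``relies on the cancellation between the three test-function pieces'' and is therefore unavailable. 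This is a misconception: the symmetrization is just the change of variables $k\leftrightarrow k_1$ (resp.\ $k\leftrightarrow k_2$), and it works equally well under absolute values. The paper therefore begins exactly as in Lemma \ref{Lemma:WeakFormulation}, obtaining
\[
\|Q[g]-Q[h]\|_{L^1_N}\le \iiint_{\RR^{3d}} |R_{k,k_1,k_2}[g]-R_{k,k_1,k_2}[h]|\,\big(\cE_k^N+\cE_{k_1}^N+\cE_{k_2}^N\big)\,dk\,dk_1\,dk_2,
\]
and then splits into only two families: a gain piece $J_1$ carrying $|g_1g_2-h_1h_2|$ (handled on $S'_{k_2}$) and a loss piece $J_2$ carrying $|gg_2-hh_2|$ (handled on $S_k$). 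After the elementary identity $|g_ig_j-h_ih_j|\le |g_i-h_i|\,|g_j|+|h_i|\,|g_j-h_j|$ this gives four integrals rather than eighteen, each of which falls directly into your high-weight/low-weight dichotomy.

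Two small corrections to your closing remarks. First, the weights that emerge naturally from the surface lemmas are $L^1_{N+\frac{2d-1}{3}}$ and $L^1_{\frac{7-2d}{3}}$, not exactly $L^1_{1/3}$ and $L^1_{N+2}$; the final step is the trivial pointwise bound $\cE_k^{N+\frac{2d-1}{3}}+\cE_k^{\frac{7-2d}{3}}\le C(\cE_k^{1/3}+\cE_k^{N+2})$. Second, your comment that $L^1_{1/3}$ absorbs the $|k|^{2-d}$ near-origin singularity in $d=2$ is off by one dimension: for $d=2$ the factor $|k|^{2-d}$ is harmless, and it is in $d=3$ that $\cE_{k_2}\cdot|k_2|^{2-d}=\cE_{k_2}^{1/3}$ forces the $L^1_{1/3}$ norm.
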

\begin{proof} 
By definition of the collision operator, we compute $$Q[g] - Q[h] = 
\iint_{\mathbb{R}^{2d}} \Big[ R_{k,k_1,k_2}[g] - R_{k,k_1,k_2}[h]   - 2  ( R_{k_1,k,k_2}[g] - R_{k_1,k,k_2}[h])  \Big] dk_1dk_2 $$
and hence 
$$ 
\begin{aligned}
\|Q[g]-Q[h]\|_{L^1_N}  & = \int_{\mathbb{R}^d} \cE_k^N|Q[g](k)-Q[h](k)|dk
\\
&\le \iiint_{\mathbb{R}^{3d}} \cE_k^N |R_{k,k_1,k_2}[g] - R_{k,k_1,k_2}[h]| \; dkdk_1dk_2
\\&\quad 
+  2 \iiint_{\mathbb{R}^{3d}} \cE_k^N |  R_{k_1,k,k_2}[g] - R_{k_1,k,k_2}[h] | dk dk_1dk_2 
\\& = \iiint_{\mathbb{R}^{3d}}  |R_{k,k_1,k_2}[g] - R_{k,k_1,k_2}[h]| \Big( \cE_k^N + \cE_{k_1}^N + \cE_{k_2}^N\Big) \; dkdk_1dk_2.
\end{aligned}
$$
Recall that $$\begin{aligned}
R_{k,k_1,k_2} [g]=  C|V_{k,k_1,k_2}|^2\delta(k-k_1-k_2)\delta(\mathcal{E}_k -\mathcal{E}_{k_1}-\mathcal{E}_{k_2})(g_1g_2-gg_1-gg_2) .
\end{aligned}
$$
Using the resonant conditions  $k = k_1 + k_2$ and $\cE_k = \cE_{k_1} + \cE_{k_2}$, we write the triple integrals in term of the surface integrals over $\RR^d\times S_k$ and $\RR^d \times S_{k_1}'$. It follows at once that 
$$ 
\begin{aligned}
\|Q[g]-Q[h]\|_{L^1_N}  
&\le 
C\int_{\mathbb{R}^d}\int_{S_{k_1}'}  |V_{k_1+k_2,k_1,k_2}|^2 | g_1g_2 - h_1h_2|
\Big( \cE_{k_1+k_2}^N + \cE_{k_1}^N + \cE_{k_2}^N\Big)  \;  \frac{d \sigma(k_2) }{|\nabla H_1^{k_1}(k_2)|} dk_1
\\
&\quad + 
8\pi \int_{\mathbb{R}^d}\int_{S_k}  |V_{k,k - k_2,k_2}|^2 | gg_2 - hh_2| \Big( \cE_k^N + \cE_{k-k_2}^N + \cE_{k_2}^N\Big)  \;   \frac{d \sigma(k_2) }{|\nabla H_0^{k}(k_2)|}  dk
\\&=: J_1 + J_2,\end{aligned}
$$
in which $H_j^k$ are defined as in \eqref{FunctionH}. 

~\\
{\bf Estimate on $J_1$.} Using the triangle inequality and the conservation law $\cE_{k_1+k_2}= \cE_{k_1} + \cE_{k_2}$, we have 
$$ 
\begin{aligned}
\cE_{k_1+k_2}^N \le C_N (\cE_{k_1}^N + \cE_{k_2}^N) \end{aligned}
$$
and 
$$ | g_1g_2 - h_1h_2| \le | g_1 - h_1 | |g_2| + |h_1| |g_2 - h_2|.$$
Thus, we obtain 
\begin{equation}\label{est-J11}
\begin{aligned}
J_1 
&\le  
C_N \int_{\mathbb{R}^d}\int_{S_{k_2}'}  |V_{k_1+k_2,k_1,k_2}|^2 | g_1 - h_1| |g_2|
 (\cE_{k_1}^N + \cE_{k_2}^N)  \;  \frac{d \sigma(k_1) }{|\nabla H_1^{k_2}(k_1)|}  dk_2
\\&\quad + 
 C_N \int_{\mathbb{R}^d}\int_{S_{k_1}'}  |V_{k_1+k_2,k_1,k_2}|^2 | h_1| | g_2 - h_2|
 (\cE_{k_1}^N + \cE_{k_2}^N)  \;  \frac{d \sigma(k_2) }{|\nabla H_1^{k_1}(k_2)|}  dk_1 .
\end{aligned}\end{equation}
Recall from \eqref{est-Vker} that $|V_{k,k_1,k_2} |^2  \le C_0\cE_k \cE_{k_1}\cE_{k_2}.$ Thus, together with Lemma \ref{lem-Sp1} and the same argument used for \eqref{SumMoment}, we estimate the first integral term in $J_1$, yielding  
$$
\begin{aligned}
 &\int_{\mathbb{R}^d}\int_{S_{k_2}'}  |V_{k_1+k_2,k_1,k_2}|^2 | g_1 - h_1| |g_2|
 (\cE_{k_1}^N + \cE_{k_2}^N)  \;   \frac{d \sigma(k_1) }{|\nabla H_1^{k_2}(k_1)|}  dk_2
\\&\le C_0 \int_{\mathbb{R}^d}\int_{S_{k_2}'}  \cE_{k_1} \cE_{k_2} | g_1 - h_1| |g_2|
 (\cE_{k_1}^{N} + \cE_{k_2}^{N})   (\cE_{k_1} + \cE_{k_2})\;    \frac{d \sigma(k_1) }{|\nabla H_1^{k_2}(k_1)|}  dk_2
\\
&\le C_0 \iint_{\mathbb{R}^{2d}} \cE_{k_1}\cE_{k_2}  (\cE_{k_1}^{\frac{2d-4}{3}}\cE_{k_2}^{\frac{4-2d}{3}} + \cE_{k_1}^{\frac{2d-8}{3}}\cE_{k_2}^{\frac{8-2d}{3}})| g_1 - h_1| |g_2|
 (\cE_{k_1}^{N} + \cE_{k_2}^{N})    \;  dk_1dk_2
\\
&\le C_M \Big( \| g-h\|_{L^1_{N+\frac{2d-1}{3}}} + \| g - h\|_{L^1_{\frac{7-2d}{3}}} \Big),
\end{aligned}$$
in which we have used the boundedness of $g$ in $L^1_{\frac{1}{3}} \cap L^1_{N+2}$. By symmetry, the same estimate holds for the second integral in $J_1$. 

~\\
{\bf Estimate on $J_2$.} We turn to estimate $J_2$. Again, using 
$$| gg_2 - hh_2| \le |g - h| |g_2| + |h| |g_2 - h_2| ,$$
and recalling $\cE_k = \cE_{k-k_2} + \cE_{k_2} \ge \max\{ \cE_{k-k_2}, \cE_{k_2}\}$, we estimate 
\begin{equation}\label{est-J22}
\begin{aligned}
J_2 
&= C \int_{\mathbb{R}^d}\int_{S_k}  |V_{k,k - k_2,k_2}|^2 | gg_2 - hh_2| \Big( \cE_k^N + \cE_{k-k_2}^N + \cE_{k_2}^N\Big)  \;  \frac{d \sigma(k_2) }{|\nabla H_0^{k}(k_2)|}  dk
\\&\le 
C_N  \int_{\mathbb{R}^d}\int_{S_k}  |V_{k,k - k_2,k_2}|^2 |g - h| |g_2| \cE_k^N \;  \frac{d \sigma(k_2) }{|\nabla H_0^{k}(k_2)|} dk
\\&\quad +C_N  \int_{\mathbb{R}^d}\int_{S_k}  |V_{k,k - k_2,k_2}|^2 |h| |g_2 - h_2| \cE_k^N \;  \frac{d \sigma(k_2) }{|\nabla H_0^{k}(k_2)|}  dk.
\end{aligned}\end{equation}
Recall that $|V_{k,k - k_2,k_2}|^2 \le C_0 \cE_k \cE_{k-k_2} \cE_{k_2} \le C_0 \cE_k^2 \cE_{k_2}$. Therefore, using Lemma \ref{lem-Sp} with $\gamma = 3/2$, 
we estimate   
$$\begin{aligned}
&
\int_{\mathbb{R}^d}\int_{S_k}  |V_{k,k - k_2,k_2}|^2 |g - h| |g_2|  \cE_k^N   \;  \frac{d \sigma(k_2) }{|\nabla H_0^{k}(k_2)|}  dk
\\
&\le C_0 \int_{\mathbb{R}^d}\int_{S_k}  \cE_k^2 \cE_{k_2} |g - h| |g_2| \cE_k^N \;  \frac{d \sigma(k_2) }{|\nabla H_0^{k}(k_2)|}  dk
\\
&\le C_0 \iint_{\mathbb{R}^{2d}} \cE_{k_2} |k_2|^{2-d}|g - h| |g_2||k|^{d-\frac72}  \cE_k^{N+2}  \;  dk dk_2
\\
&\le C_M\| g-h\|_{L^1_{N+\frac{2d-1}{3}}} ,
\end{aligned}$$
in which we have again used the boundedness of $g$ with respect to $L^1_{\frac{7-2d}{3}}$ norm.

We now estimate the second integral in $J_2$. 
$$\begin{aligned}
& 
 \int_{\mathbb{R}^d}\int_{S_k}  |V_{k,k - k_2,k_2}|^2 |h| |g_2 - h_2| \cE_k^N \;  \frac{d \sigma(k_2) }{|\nabla H_0^{k}(k_2)|}  dk
\\
&\le C_0 \int_{\mathbb{R}^d}\int_{S_k} \cE_k^2 \cE_{k_2}|h| |g_2 - h_2| \cE_k^N \;  \frac{d \sigma(k_2) }{|\nabla H_0^{k}(k_2)|}  dk
\\&\le C_N \| g - h\|_{L^1_{{\frac{7-2d}{3}}} }
\end{aligned}$$
in which again the boundedness of $h$ in $L^1_{N+\frac{2d-1}{3}}$ was used. 

Combining, we obtain 
$$
\|Q[g]-Q[h]\|_{L^1_N}  \le  C_{M,N} \Big( \| g-h\|_{L^1_{N+\frac{2d-1}{3}}} + \| g - h\|_{L^1_{\frac{7-2d}{3}} } \Big) .$$
Since $\cE_k^{N+\frac{2d-1}{3}} + \cE_k^{\frac{7-2d}{3}} \le C( \cE_k^\frac13 + \cE_k^{N+2})$, the above reduces to 
\begin{equation}\label{bound-Qgh}\|Q[g]-Q[h]\|_{L^1_N}  \le  C_{M,N} \Big( \| g - h\|_{L^1_{\frac{1}{3}} } + \| g-h\|_{L^1_{N+2}} \Big) .\end{equation} 
The proof of the lemma is complete. 
\end{proof}

~\\
\begin{proof}[Proof of Proposition \ref{Propo:HolderC12}] The proposition now follows straightforwardly from the previous lemma. Indeed, we recall the interpolation inequality (see Lemma \ref{lem-Holder}):
$$ \| g\|_{L^1_n} \le \| g \|_{L^1_p}^{\frac{q-n}{q-p}}  \| g \|_{L^1_q}^{\frac{n-p}{q-p}}  $$
for $q>n>p$. Together with the boundedness of $g,h$ in $L^1_1 \cap L^1_{N+3}$, we obtain 
$$
\begin{aligned}
 \| g-h\|_{L^1_{N+2}}  &\le \| g-h\|_{L^1_N}^{\frac13} \| g-h\|_{L^1_{N+3}}^{\frac23} \le C_M \| g-h\|_{L^1_N}^{\frac13} 
\end{aligned}$$
Lemma \ref{lem-QQ} yields 
$$\|Q[g]-Q[h]\|_{L^1_N}  \le  C_{M,N} \Big( \| g-h\|_{L^1_N} + \| g - h\|_{L^1_{\frac13}} \Big)^{\frac13} $$
which holds for all $N >0$. In particular, the above holds for $\|Q[g]-Q[h]\|_{L^1_\frac13} $. The proposition follows. 
\end{proof}

\section{Proof of Theorem \ref{Theorem:Main}}\label{Sec:Main}
\subsection{Case 1: $\varrho>0$}
The proof of our main theorem, Theorem \ref{Theorem:Main}, for the case $\varrho>0$ uses the following abstract theorem, introduced in \cite{AlonsoGambaBinh,SofferBinh1}  inspired by the previous works of \cite{Bressan,Martin}. For sake of completeness, the proof of the abstract theorem will be given in the Appendix.

\begin{theorem}\label{Theorem:ODE} Let $[0,T]$ be a time interval, $E:=(E,\|\cdot\|)$ be a Banach space, $\mathcal{S}$  be a bounded, convex and closed subset of $E$, and $Q:\mathcal{S}\rightarrow E$ be an operator  satisfying the following properties:
\begin{itemize}
\item [$(\mathfrak{A})$] Let $\|\cdot\|_*$ be a different norm of $E$, satisfying $\|\cdot\|_*\leq C_E\|\cdot\|$ for some universal constant $C_E$, and the function
\begin{eqnarray*}
|\cdot |_*: E&\longrightarrow&\mathbb{R}\\
  u&\longrightarrow& |u|_*,
\end{eqnarray*}
satisfying $$|u+v|_*\le |u|_*+|v|_*, \mbox{ and } \ \ \ |\alpha u|_*=\alpha|u|_*$$ for all $u$, $v$ in $E$ and $\alpha\in\mathbb{R}_+$.
\\ Moreover, $$|u|_*=\|u\|_*, \forall u\in\mathcal{S},$$   $$|u|_*\leq\|u\|_*\leq C_E\|u\|, \forall u\in E,$$ and
 $$|Q[u]|_*\le C_*(1+|u|_*), \forall u\in \mathcal{S},$$ then $$\mathcal{S}\subset \overline{B_*\Big(O,(2R_*+1)e^{(C_*+1)T}\Big)}:=\overline{\Big\{u\in E \Big{|} \|u\|_*\le (2R_*+1)e^{(C_*+1)T}\Big\}},$$ for some positive constant $R_*\ge 1$.
\item [$(\mathfrak{B})$] Sub-tangent condition
\begin{equation*}
\liminf_{h\rightarrow0^+}h^{-1}\text{dist}\big(u+hQ[u],\,\mathcal{S}\big)=0,\qquad \forall\,u\in\mathcal{S}\cap B_*\Big(O,(2R_*+1)e^{(C_*+1)T}\Big)\,,
\end{equation*}

\item [$(\mathfrak{C})$] H\"{o}lder continuity condition
\begin{equation*}
\big\|Q[u] - Q[v]\big\| \leq C\|u - v\|^{\beta},\quad \beta\in(0,1), \quad \forall\,u,v\in\mathcal{S}\,,
\end{equation*}
\item [$(\mathfrak{D})$] one-side Lipschitz condition
\begin{equation*}
\big[ Q[u] - Q[v], u - v \big] \leq C\|u - v\|,\qquad \forall\,u,v\in\mathcal{S}\,,
\end{equation*}
where $$\big[ \varphi,\phi \big]: = \lim_{h\rightarrow 0^{-}}h^{-1}\big(\| \phi + h\varphi \| - \| \phi \| \big).$$
\end{itemize}
Then the equation 
\begin{equation}\label{Theorem_ODE_Eq}
\partial_t u=Q[u] \mbox{ on } [0,T]\times E,~~~~u(0)=u_0 \in \mathcal{S}\cap B_*(O,R_*)
\end{equation}
has a unique solution in $C^1((0,T),E)\cap C([0,T],\mathcal{S})$.
\end{theorem}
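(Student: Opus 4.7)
The plan is to follow the Martin--Bressan scheme for ODEs on bounded convex subsets of Banach spaces, using the one-sided Lipschitz condition $(\mathfrak{D})$ to overcome the fact that $(\mathfrak{C})$ only provides H\"older (not Lipschitz) continuity of $Q$. The proof splits into construction of approximate solutions, a Cauchy estimate, passage to the limit, and uniqueness.

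First I would build, for each $\varepsilon>0$, an $\varepsilon$-approximate solution $u^\varepsilon\in C([0,T],\mathcal{S})$ by an Euler-type scheme driven by $(\mathfrak{B})$. Starting from $u_0\in\mathcal{S}\cap B_*(O,R_*)$, the sub-tangent condition furnishes $h_0\in(0,\varepsilon)$ and $v_0\in\mathcal{S}$ with $\|u_0+h_0 Q[u_0]-v_0\|\le\varepsilon h_0$. Set $u^\varepsilon(t)=u_0+tQ[u_0]$ on $[0,h_0)$, then interpolate linearly to $v_0$ at $t=h_0$, reset $u_0\mapsto v_0$, and iterate. A maximality/Zorn argument, together with the fact that $(\mathfrak{C})$ and boundedness of $\mathcal{S}$ force $\sup_{\mathcal{S}}\|Q[\cdot]\|<\infty$, allows the construction to be extended up to $T$. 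Along the way, condition $(\mathfrak{A})$ pins $u^\varepsilon$ inside the $*$-ball of radius $(2R_*+1)e^{(C_*+1)T}$, and the bounded step size shows $u^\varepsilon$ stays in (an $\varepsilon$-neighborhood of) $\mathcal{S}$; standard equicontinuity follows from the uniform bound on $\|Q[u^\varepsilon]\|$.

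The key step is showing $\{u^\varepsilon\}$ is Cauchy in $C([0,T],E)$ as $\varepsilon\to 0^+$. For $\varepsilon,\delta>0$, set $\phi(t):=\|u^\varepsilon(t)-u^\delta(t)\|$. On each smooth sub-interval of the construction, $\tfrac{d^-}{dt}u^\varepsilon=Q[u^\varepsilon]+r^\varepsilon$ with $\|r^\varepsilon\|\le\varepsilon$ in a left-Dini sense, so
\begin{equation*}
D^-\phi(t)\ \le\ \bigl[\,Q[u^\varepsilon(t)]-Q[u^\delta(t)],\,u^\varepsilon(t)-u^\delta(t)\,\bigr]\ +\ O(\varepsilon+\delta)\ \le\ C\,\phi(t)\ +\ C(\varepsilon+\delta),
\end{equation*}
where the first inequality comes from the definition of $[\cdot,\cdot]$ together with the approximation errors generated at the junction points, and the second is $(\mathfrak{D})$. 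A Gronwall argument yields $\phi(t)\le C_T(\varepsilon+\delta)$ uniformly on $[0,T]$, so $u^\varepsilon\to u$ in $C([0,T],E)$ and $u(t)\in\mathcal{S}$ because $\mathcal{S}$ is closed.

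To identify the limit as a solution, integrate the approximate scheme to get $u^\varepsilon(t)=u_0+\int_0^t Q[u^\varepsilon(s)]\,ds+e^\varepsilon(t)$ with $\|e^\varepsilon(t)\|\le C\varepsilon T$. Using $(\mathfrak{C})$ to pass to the limit in the integral (with $\|u^\varepsilon-u\|^\beta\to 0$ uniformly) yields $u(t)=u_0+\int_0^t Q[u(s)]\,ds$. Continuity of $s\mapsto Q[u(s)]$ (via $(\mathfrak{C})$ applied to the continuous curve $u$) then implies $u\in C^1((0,T),E)$ and $\partial_t u=Q[u]$. Finally, uniqueness is immediate: if $u,v$ are two such solutions with $u(0)=v(0)$, then $D^-\|u(t)-v(t)\|\le[Q[u(t)]-Q[v(t)],u(t)-v(t)]\le C\|u(t)-v(t)\|$ by $(\mathfrak{D})$, and Gronwall forces $u\equiv v$.

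The main obstacle is the Cauchy step: H\"older continuity alone would only give a Bihari-type inequality $D^-\phi\le C\phi^\beta+O(\varepsilon+\delta)$, which blows up as $\varepsilon+\delta\to 0$ before giving convergence. The purpose of condition $(\mathfrak{D})$ is exactly to replace this with a \emph{linear} Gronwall bound, thereby turning the H\"older-only continuous operator $Q$ into one for which the Euler scheme converges; getting the left-Dini bracket accounting right at the junction points of the approximate solution is the delicate technical point, and the $*$-norm hypothesis $(\mathfrak{A})$ is what confines everything to a set where the constants $C_*,C$ appearing above are actually uniform.
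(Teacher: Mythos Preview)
Your proposal is correct and follows essentially the same Martin--Bressan scheme as the paper's proof in the Appendix: piecewise-linear $\varepsilon$-approximants built from the sub-tangent condition $(\mathfrak{B})$, confinement in the $*$-ball via the growth bound in $(\mathfrak{A})$ and a discrete Gronwall argument, extension to $[0,T]$ by maximality, and then the Cauchy estimate coming from $(\mathfrak{D})$ rather than $(\mathfrak{C})$. The only cosmetic difference is that the paper defines each Euler step directly as the linear interpolant $\vartheta(t)=u+t(z-u)/\xi$ from $u$ to a point $z\in\mathcal{S}$ near $u+\xi Q[u]$, whereas you phrase it as following $u_0+tQ[u_0]$ and then correcting to $v_0$; either way one obtains a piecewise-affine curve in $\mathcal{S}$ satisfying $\|\dot\vartheta-Q[\vartheta]\|\le\varepsilon$, and the remainder of the argument is identical.
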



We shall apply Theorem \ref{Theorem:ODE} for \eqref{WeakTurbulenceGeneralized}, which reads
$$ \partial_t f = \widetilde{Q}[f], \qquad\qquad  \widetilde{Q}[f] := Q[f] - 2\nu (|k|^2+\varrho|k|^4) f,$$ 
in which $\varrho>0$.

Fix an $N>1$. We choose the Banach space $E=L^1_\frac13(\RR^d) \cap L^{1}_{N}\big(\mathbb{R}^{3}\big)$, endowed with the following norm 
$$ \| f \|_{E}: = \| f \|_{L^1_\frac13} + \|f\|_{L^1_N}.$$

We define the function $|\cdot|_*$ to be
$$|f|_*=\int_{\mathbb{R}^d}f(p)\mathcal{E}_k^\frac13 dk.$$
Set
$$\|f\|_*=\|f\|_{L^1_\frac13}.$$
By \eqref{Lemma:13Norm:E4}, it is clear that for all $f\geq0$, $f\in E$, the following inequality holds true
\begin{equation}\label{CStar}
|Q[f]|_*\le C^*\left(1+\|f\|_*\right).
\end{equation}
We then choose $C_*$ in Theorem \ref{Theorem:ODE} as $C^*$.\\ 

In addition, we take $\mathcal{S}_\varrho$ to be consisting of radial functions $f \in  L^1_\frac13(\RR^d) \cap L^{1}_{N+3}\big(\mathbb{R}^{3}\big) $ so that 
\begin{itemize}

\item[(S1)] $f\ge 0$;

\item[(S2)] $\| f\|_{L^1_\frac13} \le c_0$;

\item[(S3)] $\| f\|_{L^1_1} \le c_1$;

\item[(S4)] $\| f\|_{L^1_{N+3}} \le c_2$; 

\end{itemize}
where 
\begin{equation}\label{CONDITION:INITIALMASS}
{c}_0:=(2\mathcal{R}+1)e^{(C^*+1)T},
\end{equation}
$\mathcal{R}$, $c_1$ are some positive constant  and 
\begin{equation}\label{CONDITION:INITIALMOMENT}
{c}_{2}=\frac{3\rho_*}{2},
\end{equation}
with $\rho_*$ defined below in \eqref{RHO}. Note that from \eqref{Lemma:13Norm:E4}, $C^*$  depends on $c_1$ and $c_2$. Clearly, $\mathcal{S}_\varrho$ is a bounded, convex and closed subset of $(E,\|\cdot \|_E)$.  Moreover for all $f$ in $\mathcal{S}_\varrho$, it is straightforward that $|f|_*=\|f\|_*$. By Proposition \ref{Propo:MomentsPropa} and Remark \ref{rem-radial}, for $f_0 \in \mathcal{S}_\varrho$, solutions to \eqref{WeakTurbulenceGeneralized} are radial and remain in $\mathcal{S}_\varrho$. Thus, it suffices to verify the four conditions $(\mathfrak{A})$, $(\mathfrak{B})$, $(\mathfrak{C})$ and $(\mathfrak{D})$ of Theorem \ref{Theorem:ODE}.

\subsubsection{Condition $(\mathfrak{A})$}\label{Con2}
We choose the constant $R_*$ to be $\mathcal{R}$, then for all $u$ in $\mathcal{S}$,  $\|u\|_*\le (2{R}_*+1)e^{(C^*+1)T}$. Condition  $(\mathfrak{A})$ is satisfied.

\subsubsection{Condition $(\mathfrak{B})$}\label{Subtangent}
For the sake of simplicity, we denote $N+3$ by $N_*$.
By using Proposition \ref{Propo:C12} and recalling the definition of $\mathfrak{M}_M$, for any $g$ that makes the integrals well-defined, we have
$$\widetilde {Q}[g] \le - 2\nu\varrho \mathfrak{M}_{N_*+2}[g] +  C  \sum_{n=[N_*/2]}^{N_*-1} \Big( \mathfrak{M}_{n+\frac{7-2d}{3}}[g]   \mathfrak{M}_{N_*-n+\frac{2d-1}{3}}[g]  +  \mathfrak{M}_{n+\frac{11-2d}{3}}[g]   \mathfrak{M}_{N_*-n+\frac{2d-5}{3}}[g]\Big) .
$$
Now using Lemma \ref{lem-Holder}, with $p=1$ and $M = N_* + 1$, we get 
$$
\begin{aligned}
 \sum_{n=[N_*/2]}^{N_*-1} \Big( \mathfrak{M}_{n+\frac{7-2d}{3}}[g]   \mathfrak{M}_{N_*-n+\frac{2d-1}{3}}[g]  +  \mathfrak{M}_{n+\frac{11-2d}{3}}[g]   \mathfrak{M}_{N_*-n+\frac{2d-5}{3}}[g]\Big) 
  & \le  2\nu\varrho\mathfrak{M}_1 [g]\mathfrak{M}_{N_*+1}[g].
    \end{aligned}
$$
By assuming that $\mathfrak{M}_1[g]$ is bounded by $c_1$, we find
\begin{equation*}
\int_{\mathbb{R}^d}\widetilde {Q}[f](k)\cE_{k}^{N_*}dk \le  C\mathfrak{M}_{N_*+1}[g] - 2\nu\varrho\mathfrak{M}_{N_*+2}[g].
\end{equation*}
Now, since $C|k|^\frac43-\nu\varrho |k|^2$ is bounded for all $k$ by some positive constant $c$, we deduce that $C\mathfrak{M}_{N_*+1}[g] - \nu\varrho\mathfrak{M}_{N_*+2}[g]$ is also bounded by $C \mathfrak{M}_{N_*}[g]$. We then obtain the following estimate on $\widetilde {Q}$
\begin{equation*}
\int_{\mathbb{R}^d}\widetilde {Q}[f](k)\cE_{k}^Ndk \le  C\mathfrak{M}_{N_*}[g] - \nu\varrho\mathfrak{M}_{N_*+2}[g].
\end{equation*}
Applying again the Holder's inequality \eqref{Holder}, we end up with 
$$ \mathfrak{M}_{N_*}[g]  \le \mathfrak{M}^{\frac{2}{N_*+1}}_{1}[g] \mathfrak{M}^{\frac{N_*-1}{N_*+1}}_{N_*+2}[g] \le C\mathfrak{M}^{\frac{N_*-1}{N_*+1}}_{N_*+2}[g] .$$

Combining the above two estimates yields
\begin{equation}\label{Subtangent:E1}\begin{aligned}
&~~\int_{\mathbb{R}^d}\widetilde {Q}[f](k)\cE_{k}^{N_*}dk \leq \mathcal{P}\big[\mathfrak{M}_{N_*}[g] ]:=\ C_1\ \mathfrak{M}_{N_*}[g]  \Big( 1- C_2\mathfrak{M}^{\frac{2}{N_*-1}}_{N_*}[g]\Big)
\end{aligned}
\end{equation}
 where $C_1,C_2$ are positive constants depending on ${c}_1$. We set 
\begin{equation}\label{RHO}
\rho_* = C_2^{-\frac{N_*-1}{2}}.
\end{equation} 
Note that the function $\mathcal{P}(\cdot)$ in \eqref{Subtangent:E1} satisfies $\mathcal{P}(x)<0$ for  $0<x<\rho_*$ and $\mathcal{P}(x)>0$ for $x>\rho_*$. In addition, we may take $C_2$ in \eqref{Subtangent:E1} smaller, if needed, which allows $\rho_*$ and so $c_2$ in \eqref{CONDITION:INITIALMOMENT} to be arbitrarily large (but fixed).

Let $f$ be an arbitrary element of the set $\mathcal{S}_\varrho\cap B_*\Big(O,(2R_*+1)e^{(C_*+1)T}\Big)$. It suffices to prove the following claim: for all $\epsilon>0$, there exists $h_*$ depending on $f$ and $\epsilon$ such that 
\begin{equation}\label{claim}B_{E}(f+h\widetilde Q[f],h\epsilon)\cap\mathcal{S}_\varrho \not =\emptyset , \qquad 0<h<h_*,\end{equation}
in which $B_E(f,R)$ denotes the ball in $(E,\|\cdot \|_E)$ centered at $f$ and having radius $R$. 
For $R>0$, let $\chi_R(k)$ to be the characteristic function of the ball $B_E(0,R)$, and set  
\begin{equation}\label{def-wR} w_R:=f +h\widetilde Q[f_R] , \qquad\quad f_R(k)=\chi_R(k)f(k),\end{equation}
recalling $\widetilde{Q}[g] = Q[g] - 2\nu (|k|^2 +\varrho |k|^4) g$. We shall prove that for all $R>0$, there exists an $h_R$ so that $w_R$ belongs to $\mathcal{S}_\varrho$, for all $0<h\le h_R$. In view of \eqref{bound-Qgh}, it is clear that $w_R \in L^{1}_1 \cap L^1_{N_*}(\RR^d)$. We now check the conditions (S1)-(S3).

~\\
{\bf Condition (S1).} Note that one can write $Q[f] = Q^\mathrm{gain}[f] - Q^\mathrm{loss}[f]$, with $Q^\mathrm{gain}[f] \ge 0$ and $Q^\mathrm{loss}[f] = f Q_-[f]$. 
Since $f_R$ is compactly supported, it is clear that $Q_-[f_R]$ is bounded by a positive constant $C_f$, depending on $f,R,c_1,$ and $c_2$. Hence, 
$$\begin{aligned}
 w_R 
 &= f  + h \Big( Q[f_R]  - 2 \nu (|k|^2 +\varrho |k|^4) f_R \Big) 
 \\&\ge   f  -  h f_R \Big( C_f +  2\nu R^2 +\varrho R^4 \Big) 
 \end{aligned}$$
which is nonnegative, for sufficiently small $h$; precisely, $h \le h_R: =(C_f + 2\nu R^2+\varrho R^4)^{-1}$.

~\\
{\bf Condition (S2).} Since $$\|f\|_*<(2R_*+1)e^{(C_*+1)T},$$
and 
$$\lim_{h\to 0}\|f-w_R\|_*=0,$$
we can choose $h_*$ small enough such that 
$$\|w_R\|_*<(2R_*+1)e^{(C_*+1)T}.$$

~\\
{\bf Condition (S3).} Using Lemma \ref{Lemma:WeakFormulation} with $\varphi (k) =\cE_k$, we have  
\begin{equation}\label{est-tQQ2}
\int_{\RR^d} \widetilde Q[f_R] \cE_k \; dk \le - 2\nu \|f_R\|_{L^1_{\frac73}} \le 0.
\end{equation} 
Hence, since $f\in \mathcal{S}_\varrho$,  
$$\int_{\mathbb{R}^d}w_R\mathcal{E}_kdk=\int_{\mathbb{R}^d}(f+h\widetilde Q[f_R])\mathcal{E}_kdk \le \int_{\mathbb{R}^d}f\mathcal{E}_kdk \le c_1.$$

~\\
{\bf Condition (S4).} Now, we claim that $R$ and $h_*$ can be chosen, such that
\begin{equation}\label{condRHO}\int_{\mathbb{R}^d}w_R\mathcal{E}_k^{N_*} dk<\frac{3\rho_*}{2}\end{equation}
with $\rho_*$ defined as in \eqref{RHO}. 
In order to see this, we consider two cases. First, if 
$$\int_{\mathbb{R}^d}f\mathcal{E}_k^{N_*} dk<\frac{3\rho_*}{2},$$
we deduce from the fact  
$$\lim_{h\to 0}\int_{\mathbb{R}^d}|w_R-f|\mathcal{E}_k^{N_*}dk =\lim_{h\to 0} h\int_{\mathbb{R}^d} \widetilde Q[f_R]  \mathcal{E}_k^{N_*}dk  =0,$$
that we can choose $h_*$ small enough such that \eqref{condRHO} holds.
On the other hand, if we have $$\int_{\mathbb{R}^d}f\mathcal{E}_k^{N_*} dk=\frac{3\rho_*}{2},$$
we can then choose $R$ large enough such that
$$\int_{\mathbb{R}^d}f_R\mathcal{E}_k^{N_*} dk>{\rho_*},$$
which implies, by \eqref{Subtangent:E1} and \eqref{RHO}, that
$$\int_{\mathbb{R}^d}\widetilde {Q}[f_R]\mathcal{E}^{N_*}_k <0.$$
The estimate \eqref{condRHO} follows by definition of $w_R$.


To conclude, $w_R$ defined as in \eqref{def-wR} belongs to $\mathcal{S}_\varrho$, for $0<h\le h_R$ for sufficiently large $R$. In addition, by definition, we compute 
$$\lim_{R\to\infty}\frac{1}{h}\|w_R-f-h\widetilde Q[f]\|_E=\lim_{R\to\infty}\|\widetilde Q[f]- \widetilde Q[f_R]\|_E=0,$$
thanks to the Holder property of $\widetilde Q[f]$ with respect to $\|\cdot \|_E$. This proves that for all $\epsilon >0$, there is a large $R_\epsilon$ so that 
$w_{R_\eps}\in B_E(f+hQ[f], h\epsilon)$, for all $0<h\le h_{R_\eps}$. This proves the claim \eqref{claim}, and hence condition $(\mathfrak{B})$ is verified.  

\subsubsection{Condition $(\mathfrak{C})$}\label{Holder}
Condition $(\mathfrak{C})$ follows from Proposition \ref{Propo:HolderC12}.

\subsubsection{Condition $(\mathfrak{D})$}\label{Lipschitz}
By the Lebesgue's dominated convergence theorem, we have that
$$
\begin{aligned}
\Big[\varphi,\phi\Big] 
&= \lim_{h\rightarrow 0^{-}}h^{-1}\big(\| \phi + h\varphi \|_E - \| \phi \|_E \big)
\\
 &= \lim_{h\rightarrow 0^{-}}h^{-1} \int_{\RR^d} ( | \phi + h \varphi| - |\phi| )  (\cE_k + \cE_k^N)\; dk
\\&
\le \int_{\mathbb{R}^d}\varphi(k)\mathrm{sign}(\phi(k)) (\cE_k + \cE_k^N)dk.
\end{aligned}$$
Hence, recalling $\widetilde Q[f] = Q[f] - 2\nu ( |k|^2 + \varrho|k|^4)f$, we estimate 
$$
\begin{aligned}
\big[ \widetilde{Q}[f] - \widetilde{Q}[g], f - g \big] 
& \le 
\int_{\mathbb{R}^d}[\widetilde Q[f](k)-\widetilde Q[g](k)]\mathrm{sign}((f-g)(k))  (\cE_k^\frac13 + \cE_k^N)dk
\\
& \le 
\|Q[f]-Q[g]\|_E  -2 \nu   \| (|k|^2+\varrho|k|^4)(f-g)\|_E.
\end{aligned}$$
Using Lemma \ref{lem-QQ} and recalling $\|\cdot \|_E = \|\cdot \|_{L^1_\frac13} + \|\cdot \|_{L^1_N}$, we have
$$
\begin{aligned}
\|Q[f]-Q[g]\|_{E} 
& \le  C_{N}  \Big(\| f-g\|_{L^1_{\frac13}}+ \| f-g\|_{L^1_{N+2}} \Big) .
\end{aligned}$$
Since $C(|k|^3- \varrho |k|^4)$ is always bounded for $\varrho>0$,
we obtain 
$$
\begin{aligned}
\big[ \widetilde{Q}[f] - \widetilde{Q}[g], f - g \big]  
&\le  
  C_N \| f-g\|_{E} .\end{aligned}$$

The condition $(\mathfrak{C}$) follows. The proof of Theorem \ref{Theorem:Main} is complete for $\varrho>0$.  

\subsection{Case 2: $\varrho=0$}

Denote $f_n$ to be the unique solution to \eqref{WeakTurbulenceGeneralized} for $\varrho=\frac{1}{n}$, starting with the same initial condition $f_0$ in $\cap_{1}^\infty \mathcal{S}_n$. Proposition \ref{Propo:MomentsPropa} asserts that  $f_n$ is uniformly bounded in $L^\infty(0,\infty,L^1_{N}(\RR^d))$ for all $n$. Moreover, according to Proposition \ref{Pro:L2}, $f_n$ is uniformly bounded in $L^\infty(0,T,L^2(\RR^d))$ for all $n$. By the Dunford-Pettis theorem and Smulian's theorem, the sequence $f_n$ is equicontinuous in $t$ and it converges up to a subsequence to a nonnegative to a function $f\geq 0$ in the weak $L^1$ sense. Recalling from \eqref{Q-L1bound} that $Q[f]$ is Lipschitz from $L^1_{\frac13} \cap L^1_{N+2}$ to $L^1_N$, and $f_n$ converges weakly to $f$ in $L^1_s(\RR^d)$ for all $s\in [1,N+3]$. This implies that $Q[f_n]$ also converges to $Q[f]$ in the the weak $L^1$ sense. As a consequence, $f$ is a solution of \eqref{WeakTurbulenceInitial}.

\appendix
 \section{Appendix: Proof of Theorem \ref{Theorem:ODE}}\label{Appendix}
We recall below the proof of Theorem  \ref{Theorem:ODE}, which is Theorem 1.3 of \cite{SofferBinh1}, for the sake of completeness. The proof is divided into four parts. 
{\\\\\bf Part 1:}
Fix a element $v$ of $\mathcal{S}$, due to the H\"older continuity property of $Q[u]$, we have $$\|Q[u]\|\le \|Q[v]\|+ C\|u-v\|^\beta,~~~~~\forall u\in\mathcal{S}.$$ 
According to our assumption, $\mathcal{S}$ is bounded by a constant $C_S$. We deduce from the above inequality that 
$$\|Q[u]\|\le \|Q[v]\|+ C\left(\|u\|+\|v\|\right)^\beta\le \|Q[v]\|+ C\left(C_S+\|v\|\right)^\beta=:C_Q, \ \forall u \in \mathcal{S}.$$ 
For an element $u$ be in $\mathcal{S}$, there exists $\xi_u>0$ such that for $0<\xi<\xi_u$, $u+\xi Q[u]\in{\mathcal{S}}$, which implies $$B(u+\xi Q[u],\delta)\cap {\mathcal{S}}\backslash\{u+\xi Q[u]\}\ne {\O},$$ for $\delta$ small enough. Choose $\epsilon=2C((C_Q+1)\xi)^\beta$, then $\|Q[u]-Q[v]\|\leq \frac{\epsilon}{2}$ if $\|u-v\|\leq (C_Q+1)\xi$, by the H\"older continuity of $Q$. Let $z\in B\left(u+\xi Q[u],\frac{\epsilon \xi}{2}\right)\cap {\mathcal{S}}\backslash\{u+\xi Q[u]\} $ and define
$$t\mapsto \vartheta(t)=u+\frac{t(z-u)}{\xi},~~~~t\in[0,\xi].$$
Since  $\mathcal{S}$ is convex, $\vartheta$ maps $[0,\xi]$ into $\mathcal{S}$. 
It is straightforward that $$\|\vartheta(t)-u\|\leq \xi\|Q[u]\|+\frac{\epsilon \xi}{2}<(C_Q+1)\xi,$$
which implies $$\|Q({\vartheta}(t))-Q[u]\|\leq \frac{\epsilon}{2},~~\forall t\in[0,\xi].$$
The above inequality and the fact that $$\|\dot{\vartheta}(t)-Q[u]\|\leq \frac{\epsilon}{2},$$
leads to 
\begin{equation}\label{Theorem:ODE:E1}
\|\dot{\vartheta}(t)-Q({\vartheta}(t))\|\leq \epsilon,~~\forall t\in[0,\xi].
\end{equation}
 {\\\bf Part 2:} Let $\vartheta$ be a solution to \eqref{Theorem:ODE:E1} on $[0,\tau]$. Inequality \eqref{Theorem:ODE:E1} leads to
$$\left|\frac{\vartheta(\tau)-\vartheta(0)}{\tau}-Q(\vartheta(0))\right|_*\le C_E \epsilon,$$
which yields
 $$|\vartheta(\tau)|_*\leq |\vartheta(0)|_*+\tau C_*(|\vartheta(0)|_*+1)+\tau C_E \epsilon.$$
Since we can assume that $C_E \epsilon<1$, we obtain
\begin{equation}\label{Theorem:ODE:E4}
|\bar\vartheta(\tau)|_*\le (|\bar\vartheta(0)|_*+1)e^{(C_*+1)\tau}-1.\end{equation}
 Using the procedure of Part 1, we assume that $\bar\vartheta$ can be extended to the interval $[\tau,\tau+\tau']$.
 \\ The same arguments that lead to \eqref{Theorem:ODE:E4} imply
  $$|\bar\vartheta(\tau+\tau')|_*\le \left( (|\bar\vartheta(\tau)|_*+1)e^{(C_*+1)\tau'}-1\right).$$
Combining the above inequality with \eqref{Theorem:ODE:E4} yields
\begin{equation}\label{Theorem:ODE:E5}
\begin{aligned}
\|\bar\vartheta(\tau+\tau')\|_*=|\bar\vartheta(\tau+\tau')|_*\
\le&~~\left(|\bar\vartheta(0)|_*+1\right)\left(e^{(C_*+1)(\tau+\tau')}-1\right)\\
\le&~~ (2R_*+1)e^{(C_*+1)(\tau+\tau')},
\end{aligned}
\end{equation}
where the last inequality follows from the fact that $R_*\ge 1$.
{\\\\\bf Part 3:} From Part 1, there exists a solution $\vartheta$ to the equation \eqref{Theorem:ODE:E1} on an interval $[0,h]$. Now, we have the following procedure.
\begin{itemize}
\item {\it Step 1:}  Suppose that  we can construct the solution $\vartheta$ of \eqref{Theorem:ODE:E1} on $[0,\tau]$ $(\tau<T)$. Since $\vartheta(\tau)\in\mathcal{S}$, by the same process as in Part 1 and by \eqref{Theorem:ODE:E4} and \eqref{Theorem:ODE:E5}, the solution $\vartheta$ could be extended to  $[\tau,\tau+h_\tau]$ where $\tau+h_\tau\le T, h_\tau\le \tau$. 
\item {\it Step 2:} Suppose that we can construct the solution $\vartheta$ of \eqref{Theorem:ODE:E1}  on a series of intervals $[0,\tau_1]$, $[\tau_1,\tau_2]$, $\cdots$, $[\tau_n,\tau_{n+1}]$, $\cdots$. Observe that the increasing sequence $\{\tau_n\}$ is bounded by $T$, the sequence has a limit, defined by $\tau.$
Recall that $Q({\vartheta})$ is bounded by $C_Q$ on $[\tau_n,\tau_{n+1}]$ for all $n\in\mathbb{N},$ then $\dot{\vartheta}$ is bounded by $\epsilon+C_Q$ on $[0,\tau)$. As a consequence  $\vartheta(\tau)$ can be defined as $$\vartheta(\tau)=\lim_{n\to\infty}\vartheta(\tau_n), \dot{\vartheta}(\tau)=\lim_{n\to\infty}\dot{\vartheta}(\tau_n),$$
which, together with the fact that $\mathcal{S}$ is closed, implies that $\vartheta$ is a solution of \eqref{Theorem:ODE:E1} on $[0,\tau]$. 
\end{itemize}
By Step 2, if
 the solution $\vartheta$ can be defined on $[0,T_0)$, $T_0<T$, it could be extended to $[0,T_0]$. Now, we suppose that $[0,T_0]$ is the maximal  closed interval that $\vartheta$ could be defined, by Step 1, $\vartheta$ could be extended to a larger interval $[T_0,T_0+T_h]$, which means that $T=T_0$ and $\vartheta$ is defined on the whole interval $[0,T]$.
{\\\\\bf Part 4:} Finally, let us consider a sequence of solution $\{u^\epsilon\}$ to \eqref{Theorem:ODE:E1} on $[0,T]$. We will prove that this is a Cauchy sequence. 
Let $\{u^\epsilon\}$ and $\{v^\epsilon\}$ be two sequences of solutions to \eqref{Theorem:ODE:E1} on $[0,T]$. We note that $u^\epsilon$ and $v^\epsilon$ are affine functions on $[0,T]$. Moreover by the one-side Lipschitz condition
\begin{eqnarray*}
\frac{d}{dt} \|u^\epsilon(t)-v^\epsilon(t)\|&=&\Big[u^\epsilon(t)-v^\epsilon(t),\dot{u}^\epsilon(t)-\dot{v}^\epsilon(t)\Big]\\
&\le& \Big[u^\epsilon(t)-v^\epsilon(t),Q[u^\epsilon(t)]-Q[v^\epsilon(t)]\Big]+2\epsilon\\
&\le& C\|u^\epsilon(t)-v^\epsilon(t)\|+2\epsilon, 
\end{eqnarray*}
for a.e. $t\in[0,T]$, which leads to
$$\|u^\epsilon(t)-v^\epsilon(t)\|\le 2\epsilon\frac{e^{LT}}{L}.$$
By letting $\epsilon$ tend to $0$, $u^\epsilon\to u$ uniformly on $[0,T]$. It is straightforward that $u$ is a solution to \eqref{Theorem_ODE_Eq}.

~\\
{\bf Acknowledgements.} 
TN's research was supported in part
by the NSF under grant DMS-1405728. M.-B. Tran has been supported by NSF Grant RNMS (Ki-Net) 1107291, ERC Advanced Grant DYCON. The authors would like to thank Professor Yves Pomeau for his constructive  comments on the previous version of the paper. They would also like to express their gratitude to Professor Sergey Nazarenko for explaining to them the difference between the two works \cite{pushkarev1996turbulence} and \cite{zakharov1968stability}, which led to a major improvement of the earlier manuscript. They are also grateful to Professor Colm Connaughton and Professor Leslie M. Smith for  the discussions.

\bibliographystyle{plain}
\bibliography{QuantumBoltzmann}

\end{document}